\newtheoremstyle{mythm}                   
{6pt}
{6pt}
{\it}
{}
{\bf}
{.}
{.5em}
{}
\newtheoremstyle{mydef}                   
{6pt}
{6pt}
{}
{}
{\bf}
{.}
{.5em}
{}
\newtheoremstyle{myrem}                   
{6pt}
{6pt}
{}
{}
{\bf}
{.}
{.5em}
{}
\theoremstyle{mythm}
\newtheorem{theorem}{Theorem}[section]
\newtheorem{proposition}[theorem]{Proposition}
\newtheorem{lemma}[theorem]{Lemma}
\newtheorem{corollary}[theorem]{Corollary}
\theoremstyle{mydef}
\newtheorem{example}[theorem]{Example}
\theoremstyle{myrem}
\newtheorem{remark}[theorem]{Remark}
\numberwithin{equation}{section}
\newcounter{ithmcount}
\newenvironment{iprf}{\begin{list}{{\rm
	\alph{ithmcount})}}{\usecounter{ithmcount}\labelwidth-5pt
      \leftmargin0pt \topsep3pt \itemsep1pt \parsep2pt}}{\qedhere\end{list}}
\newenvironment{ithm}{\begin{list}{{\rm \alph{ithmcount})}}{\usecounter{ithmcount}\labelwidth18pt
      \leftmargin18pt \topsep3pt \itemsep1pt \parsep2pt}}{\end{list}}
\subjclass[2000]{}
\DeclareMathOperator{\im}{\rm im}
\newcommand{\Aut}{{\rm Aut}}
\newcommand{\GL}{{\rm GL}}
\newcommand{\Z}{\mathbb{Z}}
\newcommand{\SL}{{\rm SL}}
\renewcommand{\leq}{\leqslant}
\renewcommand{\geq}{\geqslant}
\newcommand{\ic}[1]{#1^\ast}
\newcommand{\icb}[1]{(#1)^\ast}
\begin{document}

\vspace*{-1.5cm}

\title[Group functor]{On two group functors extending Schur multipliers}
\subjclass[2010]{20J06, 14J29}
\author[H. Dietrich]{Heiko Dietrich}
\address{School of Mathematics, Monash University,
  Clayton VIC 3800, Australia}
\email{heiko.dietrich@monash.edu}
\author[P. Moravec]{Primo\v{z} Moravec}
\address{Faculty of Mathematics and Physics, University of Ljubljana, Slovenia}
\email{primoz.moravec@fmf.uni-lj.si}

\keywords{finite groups, Schur multiplier, non-abelian exterior square}
\date{\today}

\begin{abstract}
 Liedtke (2008) has introduced group functors $K$ and $\tilde K$, which are used in the context of describing certain invariants for complex algebraic surfaces. He proved that these functors are connected to the theory of central extensions and Schur multipliers. In this work we relate $K$ and $\tilde K$ to a group functor $\tau$ arising in the construction of the non-abelian exterior square of a group. In contrast to $\tilde K$, there exist efficient algorithms for constructing $\tau$, especially for polycyclic groups. Supported by computations with the computer algebra system GAP, we investigate when $K(G,3)$ is a quotient of $\tau(G)$, and when $\tau(G)$ and $\tilde K(G,3)$ are isomorphic.
\end{abstract}

\thanks{This research was supported through the programme ``Research in Pairs'' by the Mathematisches Forschungsinstitut Oberwolfach in 2018;
the authors thank the MFO for the great hospitality.
Moravec acknowledges the financial support from the Slovenian Research Agency (research
core funding No.\ P1-0222, and projects No.\ J1-8132, J1-7256 and N1-0061). Dietrich's research is supported by an Australian Research Council grant, identifier DP190100317. Both authors thank the referees for the thorough reading and  for providing many details that made some of our results stronger.}
\maketitle 

\vspace{-0.6cm}

\section{Introduction}

\noindent
In the study of complex algebraic surfaces it is of interest to find strong invariants which are not too complicated to be useful. Towards this aim, Liedtke \cite{Lie08} introduced group theoretical functors $K$ and $\tilde{K}$ that are related to the fundamental groups of the associated Galois closures. More precisely,
let $X$ be a smooth projective surface, fix a generic projection $f\colon X\to \mathbb{P}^2$ of degree $n$, and let $f_{\rm gal}\colon X_{\rm gal}\to \mathbb{P}^2$ be its Galois closure. Let $\mathbb{A}^2$ be the complement of a fixed generic line in $\mathbb{P}^2$, and set $X^{\rm aff}=f^{-1}(\mathbb{A}^2)$ and $X_{\rm gal}^{\rm aff}=f_{\rm gal}^{-1}(\mathbb{A}^2)$. It is proved in \cite[Theorems 5.1 \& 5.2]{Lie08} that $\pi_1(X^{\rm aff}_{\rm gal})$  has images isomorphic to $\tilde{K}(\pi_1(X^{\rm aff}), n)$ and to $K(\pi_1(X^{\rm aff}), n)$. It is the constructions of $K(-, n)$ and $\tilde{K}(-,n)$ that are central to Liedtke's investigation in \cite{Lie08, Lie10}. As pointed out in these papers, it is important to have a better understanding of $\tilde{K}$ in order to describe the above mentioned fundamental groups.

The aim of this work is to extend the group theoretical analysis of the functors $\tilde{K}$ and $K$, and to relate these to a functor $\tau$ associated with Brown and Loday's construction of the non-abelian tensor square of a group \cite{BL87}. The latter has applications in topology and K-theory, and can efficiently be computed for several classes of groups, such as polycyclic groups.
 
In Section \ref{sec:prelim}, we set the notations and give the definitions of $K(G,n)$, $\tilde{K}(G,n)$, and  $\tau(G)$.
In Section \ref{sec:explicit}, we elaborate on these and provide explicit descriptions that enable efficient computations for polycyclic groups.
In Section \ref{sec:tau}, we introduce the concept of an AI-automorphism and show that the existence of such an automorphism for a group $G$ yields a central extension
\[\begin{tikzcd}
    1\arrow{r} & H_2(G,\Z) \arrow{r} & \tau(G) \arrow{r} & K(G,3) \arrow{r} & 1,
\end{tikzcd}\]
similar to the one proved in \cite[Theorem 2.2]{Lie08}:
 \[\begin{tikzcd}
    1\arrow{r} & H_2(G,\Z) \arrow{r} & \tilde{K}(G,3) \arrow{r} & K(G,3) \arrow{r} & 1.
\end{tikzcd}\]
It is therefore natural to ask when $\tau(G)$ and $\tilde{K}(G,3)$ are isomorphic. In Section \ref{sec:isom}, we explore this question for several classes of groups. For example, we show that if $G$ is a finite group and a Schur cover $H/M=G$ admits an AI-automorphism which acts as inversion on $M$, then $\tau(G)\cong \tilde{K}(G,3)$.

In Section \ref{sec:bog}, we show that $K(G,3)$ and $\tilde{K}(G,3)$ are closely related to the unramified Brauer group of the field of $G$-fixed points in a complex function field.  This group is also known as the Bogomolov multiplier $B_0(G)$, and has various applications in algebraic geometry, in particular, to Noether's Problem. In Section \ref{sec:comput} we comment on our computational experiments with the system GAP \cite{gap}.

\section{Definitions and preliminary results}
\label{sec:prelim}

\noindent 
Unless stated otherwise, all groups are finite and written multiplicatively. For a group $G$ and integer $n>0$ we denote by $G^n$ the direct product of $n$ copies of $G$. We write $C_n$ for the cyclic group of size $n$. The commutator subgroup $G'$ is the subgroup of $G$ generated by all commutators $[g,h]=g^{-1}h^{-1}gh=g^{-1}g^h$ with $g,h\in G$. A free presentation for $G$ is a free group $F$ with normal subgroup $N\unlhd F$ such that $G\cong F/N$; since $G$ is assumed to be finite, we assume that $F$ is finitely generated. A polycyclic presentation ${\rm pc}\langle g_1,\ldots,g_n\mid r_1,\ldots,r_m\rangle$ for $G$ is a group presentation with abstract generators $g_1,\ldots, g_n$ and  relations $r_1,\ldots,r_m$ that are power or conjugate relations, with the convention that trivial conjugate relations are omitted; see \cite[Section 2.1]{EN} for details. For example, ${\rm pc}\langle g_1,g_2\mid g_1^2,g_1^2\rangle$ describes the Klein 4-group $\langle g_1,g_2\mid g_1^2, g_2^2, g_2^{g_1}=g_2\rangle$. A group extension of $A$ by $B$ is written $G=B.A$, meaning that $A\unlhd G$ with quotient $G/A=B$.

\subsection{Liedtke's constructions}
For a group $G$ and integer $n\geq 2$, the group $K(G,n)$ is the kernel of the map $G^n\to G/G'$ that sends an $n$-tuple $(g_1,\ldots,g_n)$ to the product of its components modulo the commutator subgroups, that is,
\[K(G,n)=\{(g_1,\ldots,g_n)\in G^n: g_1\cdots g_n\in G'\}.\]Note that every permutation of the $n$ factors in $G^n$ defines an automorphism of $K(G,n)$, that is, we have ${\rm Sym}_n\leq \Aut(K(G,n))$. To define the group $\tilde K(G,n)$, choose a free presentation $G=F/N$ for $G$, and set
\[\tilde K(G,n)=K(F,n)/K(N,n)^{F^n},\]
where $K(N,n)^{F^n}$ is the normal closure of $K(N,n)$ in $F^n$; if $n\geq 3$, then this is simply the normal closure of $K(N,n)$ in $K(F,n)$, see \cite[p.\ 248]{Lie08}. It is shown in \cite[Theorem 2.2]{Lie08} that the  definition of $\tilde K(G,n)$ does not depend on the choice of presentation for $G$.

\subsection{Non-abelian exterior square}
Let $G$ and $\ic{G}$ be groups, with isomorphism $G\to\ic{G}$, $g\mapsto \ic{g}$; we continue to use ``$\ast$'' to denote elements and subsets of $\ic{G}$.   Let $G\star \ic{G}$ be the free product of $G$ and $\ic{G}$, and, following \cite{Roc91}, define $\nu(G)$ as a quotient group of $G\star \ic{G}$ via
\[\nu(G)=(G\star \ic{G})/ \langle \{[x,\ic{y}]^z[x^z,\icb{y^z}]^{-1}, [x,\ic{y}]^{(\ic{z})}[x^z,\icb{y^z}]^{-1}: x,y,z\in G\}\rangle^{G\star\ic{G}}.\]
To simplify notation, we identify elements in $\nu(G)$ with elements in $G\star\ic{G}$, keeping in mind that further relations hold in $\nu(G)$. If we want to emphasise the parent group, then we sometimes use subscripts at generated groups $\langle -\rangle_A$ or at commutators $[-,-]_A$ to indicate that the corresponding structures are to be considered in the group $A$. For example, if $g\in G$ and $\ic{g}\in\ic{G}$, then $[g,\ic{g}]_{\nu(G)}$ denotes their commutator in $\nu(G)$, not in $G\ast \ic{G}$.  With this convention, consider $\nabla(G)=\langle [x,\ic{x}]_{\nu(G)}: x\in G\rangle$ as a subgroup of $\nu(G)$, and define
\[\tau(G)=\nu(G)/\nabla(G).\]
Note that the homomorphism $G\star\ic{G}\to G\times G$, $g_1\ic{h_1}g_2\ic{h_2}\ldots g_k\ic{h_k}\mapsto (g_1\cdots g_k,h_1\cdots h_k)$, maps commutators $[x,\ic{y}]$ to $1$, hence it induces short exact sequences
\[\begin{tikzcd}
    1\arrow{r} & G\otimes G \arrow{r} & \nu(G) \arrow{r}{c_\nu} & G\times G \arrow{r} & 1 \\[-3ex]
    1\arrow{r} & G\wedge G \arrow{r} & \tau(G) \arrow{r}{c_\tau} & G\times G \arrow{r} & 1
\end{tikzcd}
\]
where the kernels $G\otimes G$ and $G\wedge G$ are called the {\em non-abelian tensor square} and the {\em non-abelian exterior square} of $G$, respectively. In will be shown in Lemma \ref{lem1} below that this coincides with the definitions given in \cite{BL87}. We conclude with a lemma that is used later.

\begin{lemma}\label{lem:kerRoc}
Let $H\to G$ be a surjective group homomorphism with kernel $M$. Then there are induced epimorphisms $\beta\colon \nu(H)\to \nu(G)$ and $\gamma\colon \tau(H)\to \tau(G)$ whose kernels are
\[\langle M,\ic{M}\rangle_{\nu(H)} [M,\ic{H}]_{\nu(H)}[H,\ic{M}]_{\nu(H)}\quad\text{and}\quad\langle M,\ic{M}\rangle_{\tau(H)} [M,\ic{H}]_{\tau(H)}[H,\ic{M}]_{\tau(H)}.\]
\end{lemma}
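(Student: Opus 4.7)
The plan is to exploit the presentation of $\nu(-)$ as a quotient of a free product and construct a two-sided inverse to $\beta$ modulo the claimed kernel. First, I would produce $\beta$ by noting that the surjection $\pi\colon H\to G$ together with its star-counterpart $\ic{\pi}\colon \ic{H}\to\ic{G}$ yields a surjection $H\star \ic{H}\to G\star \ic{G}$, and by inspection each defining relator of $\nu(H)$ is sent to a defining relator of $\nu(G)$; hence it descends to a surjection $\beta\colon \nu(H)\to \nu(G)$. Since $\beta$ sends each generator $[h,\ic{h}]$ of $\nabla(H)$ to $[\pi(h),\icb{\pi(h)}]\in\nabla(G)$, it also induces a surjection $\gamma\colon\tau(H)\to\tau(G)$.

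Let $K=\langle M,\ic{M}\rangle_{\nu(H)} [M,\ic{H}]_{\nu(H)}[H,\ic{M}]_{\nu(H)}$. The inclusion $K\subseteq \ker\beta$ is immediate since $M$ and $\ic{M}$ lie in $\ker\beta$. My next step is to verify that $K$ is normal in $\nu(H)$, so that $\nu(H)/K$ makes sense. The subgroups $[M,\ic{H}]$ and $[H,\ic{M}]$ are normal in $\nu(H)$: for $m\in M$, $h\in H$, $z\in H$, the defining relations give $[m,\ic{h}]^z=[m^z,\icb{h^z}]\in [M,\ic{H}]$ (using $M\unlhd H$), and similarly for conjugation by $\ic{z}\in\ic{H}$ and for $[H,\ic{M}]$. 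For $\langle M,\ic{M}\rangle$: conjugation of $m\in M$ by $h\in H$ stays in $M$, while $m^{\ic{h}}=m[m,\ic{h}]\in M\cdot [M,\ic{H}]\subseteq K$, and symmetrically $\ic{m}^h=\ic{m}[h,\ic{m}]^{-1}\in \ic{M}\cdot [H,\ic{M}]\subseteq K$. Combined, these computations show $K\unlhd \nu(H)$.

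For the reverse inclusion $\ker\beta\subseteq K$, my plan is to construct an inverse homomorphism $\alpha\colon \nu(G)\to \nu(H)/K$ to the map on $\nu(H)/K$ induced by $\beta$. For each $g\in G$ choose a lift $\tilde g\in H$, and define $\alpha(g)=\tilde g K$ and $\alpha(\ic{g})=\ic{\tilde g}K$; this is independent of the choice of lift because two lifts differ by an element of $M\subseteq K$ (resp.\ $\ic{M}\subseteq K$), so it extends to a map $G\star \ic{G}\to \nu(H)/K$. I would then check that the defining relators of $\nu(G)$ vanish in $\nu(H)/K$: for $x,y,z\in G$ with lifts $\tilde x,\tilde y,\tilde z\in H$, the element $\tilde x^{\tilde z}$ is a lift of $x^z$, so the relation $[\tilde x,\ic{\tilde y}]^{\tilde z}=[\tilde x^{\tilde z},\icb{\tilde y^{\tilde z}}]$ holding in $\nu(H)$ transports to the required relation in $\nu(H)/K$; the case of conjugation by $\ic{\tilde z}$ is analogous. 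Hence $\alpha$ descends to $\nu(G)\to \nu(H)/K$, and evaluating both compositions on generators shows $\alpha$ and the quotient of $\beta$ are mutually inverse, so $\ker\beta=K$.

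The statement for $\gamma$ then follows formally: since $\beta(\nabla(H))\subseteq\nabla(G)$, we have $\ker\gamma=(K\cdot\nabla(H))/\nabla(H)$, and the images of the three factors of $K$ in $\tau(H)=\nu(H)/\nabla(H)$ are exactly $\langle M,\ic{M}\rangle_{\tau(H)}$, $[M,\ic{H}]_{\tau(H)}$, and $[H,\ic{M}]_{\tau(H)}$. The main obstacle I anticipate is the independence-of-lift verification in the construction of $\alpha$ and the correct translation of the defining relations of $\nu(G)$ into $\nu(H)/K$; once these are dealt with, the normality checks and the passage to $\tau$ are routine.
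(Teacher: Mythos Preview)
Your argument is correct and self-contained. The paper, by contrast, does not prove the $\nu$-statement at all: it simply cites \cite[Proposition~2.5]{Roc91} for the description of $\ker\beta$, and then carries out the passage to $\tau$ in essentially the same way you do (observing that $\beta$ maps $\nabla(H)$ onto $\nabla(G)$, so $\ker\gamma=(\ker\beta)\cdot\nabla(H)/\nabla(H)$). What you have written is thus a direct proof of Rocco's result via the explicit section $\alpha$ modulo $K$; this buys independence from the reference and makes the normality of $K$ and the well-definedness of the lifted map transparent. One small point to tighten: in your last paragraph you invoke only $\beta(\nabla(H))\subseteq\nabla(G)$ to conclude $\ker\gamma=(K\cdot\nabla(H))/\nabla(H)$, but what is actually needed is the equality $\beta(\nabla(H))=\nabla(G)$ (so that $\beta^{-1}(\nabla(G))=K\cdot\nabla(H)$); this is immediate since every generator $[g,\ic{g}]$ of $\nabla(G)$ is hit by $[h,\ic{h}]$ for any lift $h$ of $g$.
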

\begin{proof}
For $\beta$ this is \cite[Proposition 2.5]{Roc91}. Since $\beta$ maps $\nabla(H)$ to $\nabla(G)$, this  induces  $\gamma$. Note that $\ker\gamma=\{x\cdot \nabla(H): x\in\beta^{-1}(\nabla(G))\}$, and $\beta^{-1}(\nabla(G))=(\ker\beta) \cdot \nabla(H)$; the claim follows.
\end{proof}


\subsection{Schur multiplier}\label{secSM}
 
We recall some facts about the Schur multiplier of a finite group and refer to \cite{Kap} for more details, in particular, Proposition 2.1.1 and Theorems 2.1.4, 2.4.6, 2.5.1, 2.6.7, and 2.7.3.  A \emph{Schur cover} of $G$ is a group $H$ such that $H/M\cong G$ for some $M\leq H'\cap Z(H)$ isomorphic to the {\em Schur multiplier} \[M(G)=H^2(G,\mathbb{C}^\times).\] Note that $G'\cong H'/M$ since $M\leq H'$. Schur (1904-07) has shown that $M(G)$ is finite and if $F/N=G$ is a free presentation of $G$ with $F$ a free group of finite rank $r$, then $M(G)\cong(F'\cap N)/[F,N]$; the latter is  known as Hopf's formula. Every Schur cover $H$ of $G$ is isomorphic to $F/S$ for some normal subgroup $S\unlhd F$ that defines a complement $S/[F,N]$ to $(F'\cap N)/[F,N]\cong M(G)$ in $N/[F,N]$; in particular, $S/[F,N]$ is free abelian of  rank $r$ and $(F'\cap N)/[F,N]$ is the torsion subgroup of $N/[F,N]$. The isomorphism type of a Schur cover is in general not uniquely determined. However, Schur proved that the isomorphism type of $H'$ depends only on $G$, and not on the chosen cover $H$. Miller (1952) has shown that  \[M(G)\cong H_2(G,\Z).\]   

We will see in Remark \ref{remId} below that we can identify $[G,\ic{G}]_{\tau(G)}=G\wedge G$ via $[g,\ic{h}]\mapsto g\wedge h$. This identification allows us to define the surjective commutator map \[\kappa\colon G\wedge G\to G',\quad g\wedge h\mapsto [g,h],\]which, according to \cite[Corollary 2]{BJR},  can be lifted to an isomorphism \[G\wedge G\to H',\quad g\wedge h\to [g',h'],\] where $g',h'\in H$ are lifts of $g,h\in G$. Since $G'=H'/M(G)$, this yields  an exact sequence
\[\begin{tikzcd}
    1\arrow{r} & M(G)\arrow{r} & G\wedge G \arrow{r}{\kappa} & G'\arrow{r} & 1
\end{tikzcd}
\]with $\ker\kappa\cong M(G)$ central in $G\wedge G$. This shows that if $G$ is abelian, then $G\wedge G\cong M(G)\cong H'$, and a Schur cover of $G$ is abelian if and only if $G$ is cyclic if and only if $M(G)=1$.

\section{Explicit description}\label{secExp}
\label{sec:explicit}
As a first step towards investigating the relation between $\tau(G)$ and $\tilde K(G,3)$ we provide a more concrete description of these groups.

\subsection{An explicit description of $\tau$}
The next lemma summarises some facts about $\tau(G)$ and $\nu(G)$. 

\begin{lemma}\label{lem1}
Every $w\in\nu(G)$ can be written uniquely as $w=g\ic{h}w'$ for some  $w'\in [G,\ic{G}]_{\nu(G)}$ and $g,h\in G$; the analogous statement holds in $\tau(G)$. Moreover, we have  \[\ker c_\nu=[G,\ic{G}]_{\nu(G)}\cong G\otimes G\quad\text{and}\quad \ker c_\tau=[G,\ic{G}]_{\tau(G)}\cong G\wedge G.\]
\end{lemma}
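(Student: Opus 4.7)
The plan is to first construct the map $c_\nu\colon \nu(G)\to G\times G$ and identify its kernel, from which the normal form and uniqueness follow, and then transport everything to $\tau(G)$ by quotienting by $\nabla(G)$. To build $c_\nu$, I would start with the projection $G\star\ic{G}\to G\times G$ sending $g\mapsto(g,1)$ and $\ic{h}\mapsto(1,h)$; each defining relator of $\nu(G)$ is a product of commutators of the form $[x,\ic{y}]$ and their conjugates, which all lie in the kernel. Hence this projection factors through $\nu(G)$ to yield $c_\nu$. The inclusion $[G,\ic{G}]_{\nu(G)}\subseteq \ker c_\nu$ is immediate.

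The central step is the normal form. Writing $w\in\nu(G)$ as a word in the free product $G\star\ic{G}$, I would collect all letters from $G$ to the left and letters from $\ic{G}$ to the middle using the swap $\ic{y}x=x\ic{y}[x,\ic{y}]^{-1}$, with the residue accumulating in $\langle [x,\ic{y}]:x,y\in G\rangle_{\nu(G)}$. The key point is that the defining relations of $\nu(G)$ assert that conjugation by arbitrary elements of $G$ or $\ic{G}$ maps a commutator $[x,\ic{y}]$ to another such commutator $[x^z,\icb{y^z}]$; therefore $[G,\ic{G}]_{\nu(G)}$ is normal in $\nu(G)$, and the residual commutators can be collected to the right as a single $w'\in[G,\ic{G}]_{\nu(G)}$. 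Uniqueness of the decomposition $w=g\ic{h}w'$ is then immediate: applying $c_\nu$ yields $(g,h)$, which determines $g$ and $h$, and hence $w'$. As a byproduct, $\ker c_\nu=[G,\ic{G}]_{\nu(G)}$.

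For the isomorphism $\ker c_\nu\cong G\otimes G$, I would invoke Rocco's identification: the map $\phi\colon G\otimes G\to [G,\ic{G}]_{\nu(G)}$, $g\otimes h\mapsto [g,\ic{h}]$, is well-defined because the conjugation relations in $\nu(G)$ translate precisely into the defining relations of the non-abelian tensor square. Surjectivity is clear, and injectivity is established by constructing an inverse using the universal property of $G\otimes G$, as carried out in \cite{Roc91}. The $\tau$-version is then a quotient argument: $\nabla(G)$ is normal in $\nu(G)$ because the defining relations give $[x,\ic{x}]^z=[x^z,\icb{x^z}]\in\nabla(G)$ and similarly for $\ic{z}$-conjugation, and $\nabla(G)\leq [G,\ic{G}]_{\nu(G)}$. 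Hence $c_\nu$ descends to $c_\tau\colon\tau(G)\to G\times G$ with kernel $[G,\ic{G}]_{\nu(G)}/\nabla(G)=[G,\ic{G}]_{\tau(G)}\cong (G\otimes G)/\nabla\cong G\wedge G$, which is the definition of the non-abelian exterior square. The normal form descends to $\tau(G)$ verbatim.

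The main obstacle is the isomorphism $[G,\ic{G}]_{\nu(G)}\cong G\otimes G$: exhibiting the surjection $\phi$ is easy, but proving injectivity requires verifying that the defining relations of $\nu(G)$ are strong enough to recover all tensor square relations yet not so strong as to impose additional collapses. This is precisely Rocco's result, and I would cite it rather than reprove it. All remaining steps, including the normality of $[G,\ic{G}]$ and $\nabla$ and the collection argument for the normal form, are formal consequences of the two conjugation relations defining $\nu(G)$.
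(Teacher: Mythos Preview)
Your proposal is correct and follows essentially the same route as the paper: a collection argument using the commutator swap and the normality of $[G,\ic{G}]_{\nu(G)}$ (which the paper expresses via the explicit identities $[\ic{h},g]k=k[\icb{h^k},g^k]$ and $[\ic{h},g]\ic{k}=\ic{k}[\icb{h^k},g^k]$), uniqueness from applying $c_\nu$, and a citation of Rocco \cite{Roc91} for the identification with $G\otimes G$, with the $\tau$-version obtained by passing to the quotient by $\nabla(G)$.
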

\begin{proof}
  Let $g=g_1\ic{h_1}\cdots g_n\ic{h_n}\in\nu(G)$. The identities 
  \begin{eqnarray}\label{eqIds}\ic{h}g=g\ic{h}[\ic{h},g],\quad [\ic{h},g]k=k[\icb{h^k},g^k],\quad\text{and}\quad [\ic{h},g]\ic{k}=\ic{k}[\icb{h^k},g^k]
  \end{eqnarray}
  can be used to rewrite  $g=g_1\ic{h_1}\cdots g_n\ic{h_n}=(g_1\cdots g_n)\icb{h_1\cdots h_n}w$ with  $w\in [G,\ic{G}]_{\nu(G)}$. Recall that $c_\nu$ maps $[G,\ic{G}]_{\nu(G)}$ to 1, hence  $c_\nu(g)=(g_1\cdots g_n,h_1\cdots h_n)$, which proves  $\ker c_\nu= [G,\ic{G}]_{\nu(G)}$. The uniqueness of the expression of $g$ follows from the exact sequence associated with $c_\nu$. The argument for $\tau(G)$ and $c_\tau$ is exactly the same. Recall that above we have defined $G\otimes G=\ker c_\nu$ and $G\wedge G=\ker c_\tau$; it is shown in  \cite[Proposition 2.6]{Roc91} that the 'non-abelian tensor square' of  \cite{BL87} is isomorphic to $[G,\ic{G}]_{\nu(G)}$ via $[g,\ic{h}]\mapsto g\otimes h$, and from this it follows that the 'non-abelian exterior square' of \cite{BL87} is is naturally isomorphic to $[G,\ic{G}]_{\tau(G)}$.
\end{proof}

\begin{remark}\label{remId}
 Using Lemma \ref{lem1}, we  can identify \[G\otimes G=[G,\ic{G}]_{\nu(G)}\quad\text{and}\quad [G,\ic{G}]_{\tau(G)}=G\wedge G\]
  via  $g\otimes h\to [g,\ic{h}]$ and $g\wedge h\to [g,\ic{h}]$, respectively.
\end{remark}

\begin{proposition}\label{prop:stau}
The group $\tau(G)$ is isomorphic to $G^2.(G\wedge G)$ with multiplication
\[(a,b;c)(g,h;d)=(ag,bh;(b^h\wedge g^h)c^{gh}d),\]
and derived subgroup $\tau(G)'\cong (G'\times G').(G\wedge G)$.
\end{proposition}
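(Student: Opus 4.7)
The plan is to read off both assertions from the normal form supplied by Lemma~\ref{lem1}. First I would invoke Lemma~\ref{lem1} together with Remark~\ref{remId} to identify $\tau(G)$ set-theoretically with $G\times G\times(G\wedge G)$ by writing each element uniquely as $g\,\ic{h}\,c$ with $g,h\in G$ and $c\in[G,\ic{G}]_{\tau(G)}\cong G\wedge G$. The exact sequence $1\to G\wedge G\to \tau(G)\to G\times G\to 1$ from Lemma~\ref{lem1} then already gives the extension structure $\tau(G)\cong G^2.(G\wedge G)$; the content of the first assertion is to rewrite a product of two normal forms back into normal form.

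The computation I would carry out on the word $(a\,\ic{b}\,c)(g\,\ic{h}\,d)$ consists of three moves. First, push $c$ past $g$ via $cg=g\,c^g$. Second, push $\ic{b}$ past $g$ via $\ic{b}\,g=g\,\ic{b}\,[\ic{b},g]$ (as already derived in the proof of Lemma~\ref{lem1}). Third, push $\ic{h}$ leftward past the elements $[\ic{b},g]$ and $c^g$ of $[G,\ic{G}]_{\tau(G)}$, using $x\,\ic{h}=\ic{h}\,x^{\ic{h}}$ together with the fact that the defining relations $[x,\ic{y}]^z=[x^z,\icb{y^z}]=[x,\ic{y}]^{\ic{z}}$ of $\nu(G)$ give $x^{\ic{h}}=x^{h}$ whenever $x\in[G,\ic{G}]_{\tau(G)}$; in particular $(c^g)^{\ic{h}}=c^{gh}$ and $[\ic{b},g]^{\ic{h}}=[\icb{b^h},g^h]$. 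The result is the normal form $(ag)\,\icb{bh}\,\bigl([\icb{b^h},g^h]\,c^{gh}\,d\bigr)$, and the identification of Remark~\ref{remId} sends $[\icb{b^h},g^h]=[g^h,\icb{b^h}]^{-1}$ to $(g^h\wedge b^h)^{-1}=b^h\wedge g^h$, yielding exactly the stated formula.

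For the derived subgroup, the exact sequence from Lemma~\ref{lem1} induces a surjection $\tau(G)'\to(G\times G)'=G'\times G'$ whose kernel is $\tau(G)'\cap(G\wedge G)$. Since $G\wedge G=[G,\ic{G}]_{\tau(G)}$ is generated by commutators of $\tau(G)$, it is contained in $\tau(G)'$; hence the kernel equals $G\wedge G$, giving $\tau(G)'\cong(G'\times G').(G\wedge G)$.

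The main obstacle is the book-keeping in the third move: the delicate point is that for $x\in[G,\ic{G}]_{\tau(G)}$ conjugation by $\ic{z}$ and by $z$ must agree, so that $c^{gh}$ unambiguously refers to conjugation by the product $gh\in G$. This is precisely what the pair of defining relations of $\nu(G)$ guarantees, and once one has observed it the whole rewriting reduces to straightforward symbol-pushing.
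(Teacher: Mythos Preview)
Your proof is correct and essentially follows the paper's approach for the multiplication formula: the paper's own proof is a two-line sketch that invokes Lemma~\ref{lem1} and records the one nontrivial observation that $c^g$ and $c^{\ic{g}}$ agree on $[G,\ic{G}]_{\tau(G)}$, which is precisely your ``delicate point'' in the third move. You have simply spelled out the rewriting that the paper leaves to the reader.

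For the derived subgroup there is a small difference worth noting: the paper does not argue this at all but cites \cite[Theorem~3.1]{Roc91}. Your argument via the exact sequence---that $c_\tau$ maps $\tau(G)'$ onto $(G\times G)'=G'\times G'$ and that $G\wedge G=[G,\ic{G}]_{\tau(G)}\leq\tau(G)'$ forces the kernel to be all of $G\wedge G$---is a clean, self-contained alternative that avoids the external reference.
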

\begin{proof}
By Lemma \ref{lem1}, the element $g\ic{h}w\in \tau(G)$ corresponds to $(g,h;w)\in G^2.(G \wedge G)$, and this correspondence defines the multiplication in $G^2.(G\wedge G)$. Note that $c\in G\wedge G$ corresponds to an element of the form $\prod_i[x_i,\ic{y_i}]$, and so $c^g$ and $c^{(\ic{g})}$ both correspond to  $\prod_i[x_i^g,\icb{y_i^g}]$. The last claim is \cite[Theorem~3.1]{Roc91}.
\end{proof}

\begin{remark}\label{remCoc1}
  If $G\wedge G$ is abelian, then Proposition \ref{prop:stau} shows that $\tau(G)$ is an extension of $G\wedge G$ by $G^2$ defined by a 2-cocycle $\gamma\in Z^2(G^2,G\wedge G)$ with $\gamma((a,b),(g,h))=b^h\wedge g^h$; the $G^2$-module structure on $G\wedge G$ is defined by  $(u\wedge v)^{(g,h)}=(u^{gh}\wedge v^{gh})$, cf.\ \cite[\S 11.4]{Rob82}.
\end{remark}
\begin{remark}\label{remSplit1}
  The extension in Remark \ref{remCoc1} is split if and only if there is a function $f\colon G^2\to G\wedge G$ such that the subset $\{(a,b;f(a,b)):a,b\in G\}$ is a subgroup of $G^2.(G\wedge G)$ isomorphic to $G^2$ via $(a,b)\mapsto (a,b;f(a,b))$.  In this case $A=\{(a,1;f(a,1)):a\in G\}$ and $B=\{(1,b;f(1,b)):b\in G\}$ are commuting and disjoint subgroups of $G^2.(G\wedge G)$ isomorphic to $G$. In particular, the maps $a\to f(a,1)$ and $b\to f(1,b)$ are $1$-cocycles $G\to G\wedge G$; recall that a 1-cocycle $r\colon G\to G\wedge G$ is a map satisfying $r(gh)=r(g)^hr(h)$ for all $g,h\in G$. Conversely, for every pair of  1-cocycles $l,r\colon G\to G\wedge G$ the sets $L=\{(a,1;l(a)):a\in G\}$ and $R=\{(1,b;r(b)): b\in G\}$ are disjoint subgroups of $G^2.(G\wedge G)$ isomorphic $G$. Together they form a complement to $G\wedge G$ if and only if they commute, that is, if and only if $l(a)^br(b)=(b\wedge a)r(b)^al(a)$ for all $a,b\in G$. The existence of such 1-cocycles is a necessary and sufficient condition for the extension  to be split.
\end{remark} 

\begin{remark}\label{remKappa}
  It follows from  \cite[Proposition 2.5]{BL87} that $G$ acts trivially on the kernels of the maps $\kappa\colon G\wedge G\to G'$ and $\kappa'\colon G\otimes G\to G'$, both induced by the commutator map. This proves that $\ker\kappa\unlhd \tau(G)$ and $\ker\kappa'\unlhd \nu(G)$ are central. Since $(G\wedge G)/\ker\kappa\cong G'$, we have $\tau(G)/\ker\kappa \cong G^2.G'$ with multiplication $(a,b;c)(g,h;d)=(ag,bh;[b,g]^hc^{gh}d)$.   An analysis similar to that in Remark \ref{remSplit1} can be used to determine necessary and sufficient conditions for this extension to be split. 
\end{remark}

\subsection{An explicit description of $\tilde K$}

The following result is based on \cite[Theorem~3.2]{Lie08}.
We denote the components of a tuple $g$ by $g_1,g_2,\ldots$, that is, $g\in G^{n-1}$ is  $g=(g_1,\ldots ,g_{n-1})$. 
\begin{proposition}\label{prop:Kt} Let $G$ be a group with Schur cover $H$ and $H/M=G$. The following hold for $n\geq 3$.
\begin{ithm}
\item We have $K(G,n)\cong G^{n-1}.G'$
where the product of $u=(g;c)$ and $v=(h;d)$ in $G^{n-1}.G'$ is defined as
\[uv=(gh; \mu(g,h)c^hd)\]
where $c^h=c^{(h_1\cdots h_{n-1})^{-1}}$ and 
$\mu(g,h)=(g_1h_1)\cdots (g_{n-1}h_{n-1})(g_1\cdots g_{n-1})^{-1}(h_1\cdots h_{n-1})^{-1}$; we have $\mu(g,h)(c^g)^h=c^{(gh)}\mu(g,h)$ for all $g,h\in G^{n-1}$ and $c\in G'$.
\item Let $\mu$ be the map defining $K(H,n)\cong H^{n-1}.H'$ as in a).  Identifying $H'=G\wedge G$ via the isomorphism in Section \ref{secSM}, we have  $\tilde K(G,n)\cong G^{n-1}.(G\wedge G)$ where the product of  $u=(g;c)$ and $v=(h;d)$ in $G^{n-1}.(G\wedge G)$ is defined as
  \[uv=(gh;\mu(g',h')c^hd);\] here $g',h'\in H^{n-1}$ are elements that map onto $g,h\in G^{n-1}$, and $c^h$ is defined as in a).
\item There is a central extension\[\begin{tikzcd}
    1\arrow{r} & H_2(G,\Z) \arrow{r} & \tilde{K}(G,n) \arrow{r} & K(G,n) \arrow{r} & 1.
\end{tikzcd}\]
\end{ithm}
\end{proposition}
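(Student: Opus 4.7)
The plan is to establish (a), (b), and (c) in sequence: (a) by direct construction, (b) by transfer through a Schur cover, and (c) by relating the two descriptions via the commutator map $\kappa$ from Section \ref{secSM}.

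For (a), I would define $\phi\colon K(G,n)\to G^{n-1}\times G'$ by sending $(g_1,\ldots,g_n)$ to $((g_1,\ldots,g_{n-1});g_1\cdots g_n)$; its inverse sends $((g_1,\ldots,g_{n-1});c)$ to $(g_1,\ldots,g_{n-1},(g_1\cdots g_{n-1})^{-1}c)$, so $\phi$ is a bijection. Transporting the multiplication of $K(G,n)\leq G^n$ along $\phi$, given $u=\phi^{-1}(g;c)$ and $v=\phi^{-1}(h;d)$, the first $n-1$ coordinates of $uv\in G^n$ are clearly $g_ih_i$, and a direct rearrangement of the last coordinate yields $((g_1h_1)\cdots(g_{n-1}h_{n-1}))^{-1}\mu(g,h)c^hd$. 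Membership $\mu(g,h)\in G'$ is immediate by projecting to $G/G'$, where the expression collapses. The identity $\mu(g,h)(c^g)^h=c^{gh}\mu(g,h)$ follows from the observation that $(g_1\cdots g_{n-1})^{-1}(h_1\cdots h_{n-1})^{-1}=((g_1h_1)\cdots(g_{n-1}h_{n-1}))^{-1}\mu(g,h)$, so $(c^g)^h$ and $c^{gh}$ are conjugate by $\mu(g,h)$.

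For (b), let $H/M=G$ be a Schur cover as set up in Section \ref{secSM}. Part (a) applied to $H$ gives $K(H,n)\cong H^{n-1}.H'$, and Section \ref{secSM} identifies $H'\cong G\wedge G$. Following \cite[Theorem 3.2]{Lie08}, I plan to verify that $\tilde K(G,n)$ is isomorphic to $K(H,n)/N_0$ for $N_0=\{(m;1):m\in M^{n-1}\}$; this is the technical heart of the proof and requires revisiting the free presentation $F\twoheadrightarrow H\twoheadrightarrow G$ from Section \ref{secSM} and tracking $K(F,n)/K(N,n)^{F^n}$ through this factorization. Granted this, the underlying set of $K(H,n)/N_0$ is $G^{n-1}\times(G\wedge G)$ with multiplication $(g;c)(h;d)=(gh;\mu(g',h')c^hd)$. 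Well-definedness of $\mu(g',h')\in H'\cong G\wedge G$ as a function of $g,h$ is straightforward: replacing $g'_i$ by $g'_im_i$ with $m_i\in M\leq Z(H)$ leaves $\mu$ unchanged, because central factors can be pulled out and cancel in the defining product. The main obstacle is the identification $\tilde K(G,n)\cong K(H,n)/N_0$ itself.

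For (c), I would define a map $\tilde K(G,n)\to K(G,n)$ by $(g;c)\mapsto(g;\kappa(c))$. This is a surjective homomorphism because the projection $H'\to G'$ commutes with $\mu$, giving $\kappa(\mu(g',h'))=\mu(g,h)$, and hence the multiplication formula of (b) reduces to that of (a) modulo $\ker\kappa$. The kernel is $\{(1;c):c\in\ker\kappa\}$, which by Section \ref{secSM} is isomorphic to $\ker\kappa\cong M\cong H_2(G,\Z)$. Centrality of this kernel uses two facts: $\ker\kappa$ is central in $G\wedge G$ (Remark \ref{remKappa}), and $\ker\kappa\cong M\leq Z(H)$, so conjugation in $H'$ acts trivially on $\ker\kappa$; combined with $\mu(1,h')=\mu(h,1)=1$, the multiplication formula shows $(1;c)$ commutes with every $(h;d)$.
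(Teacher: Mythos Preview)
Your approach is correct and essentially follows the paper's proof; the bijection in (a) and the quotient description in (b) are exactly what the paper uses. The only difference is that you are proposing more work than necessary in two places. For (b), the isomorphism $\tilde K(G,n)\cong K(H,n)/K(M,n)$ is precisely \cite[Theorem~3.2]{Lie08}, so there is no need to ``revisit the free presentation'' and track $K(F,n)/K(N,n)^{F^n}$ through the factorisation; the paper simply cites this and then observes that under the bijection of (a) the subgroup $K(M,n)$ corresponds to $M^{n-1}.1$ (which is your $N_0$), using only that $M$ is abelian. For (c), the paper does not argue via the explicit descriptions at all but directly invokes \cite[Theorem~2.2]{Lie08}; your alternative argument via $\kappa$ is valid, but note that the key input is that $G$ (not $H'$) acts trivially on $\ker\kappa$, which is exactly the content of Remark~\ref{remKappa}.
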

\begin{proof}
\begin{iprf}
\item By definition, $K(G,n)=\{(g_1,\ldots,g_{n-1},g_{n-1}^{-1}\cdots g_1^{-1} c): g_1,\ldots,g_{n-1}\in G, c\in G'\}$. The isomorphism from $G^{n-1}.G'$ to  $K(G,n)$ maps $(g;c)\in G^{n-1}.G'$ to $(g,g_{n-1}^{-1}\cdots g_1^{-1} c)\in K(G,n)$; the definition of $\mu$ and $c^h$ guarantee that this is an isomorphism.
\item   It is shown in \cite[Theorem 3.2]{Lie08} that $\tilde K(G,n)\cong K(H,n)/K(M,n)$, independent of the chosen cover. The proof of a) shows that there is an isomorphism $\varphi\colon H^{n-1}.(G\wedge G)\to K(H,n)$. Recall that $M\leq Z(H)$ is  central, hence it follows from the definition of $\mu$ that $M^{n-1}.1$ is a central subgroup of $H^{n-1}.(G\wedge G)$. This subgroup is mapped under $\varphi$ onto $K(M,n)$, which proves that $\tilde K(G,n)\cong K(H,n)/K(M,n)\cong (H^{n-1}.(G\wedge G))/(M^{n-1}.1)\cong G^{n-1}.(G\wedge G)$. Note that the multiplication is well-defined since $M\leq Z(H)$.
\item This is   \cite[Theorem 2.2]{Lie08}: note that  the epimorphism from $\tilde K(G,n)\cong G^{n-1}.(G\wedge G)$ to $K(G,n)\cong G^{n-1}.G'\cong K(G,n)$ can be induced by  $\kappa\colon G\wedge G\to G'$; recall from Section \ref{secSM} and Remark \ref{remKappa} that $\ker\kappa\cong H^2(G,\Z)$ is central. 
\end{iprf} 
\end{proof}

\begin{remark}
   If $G'$ is abelian, then Proposition \ref{prop:Kt}a) shows that $K(G,n)$ is an extension of $G'$ by $G^{n-1}$ defined by the 2-cocycle $\mu\in Z^2(G^{n-1},G')$ as in the proposition and $G^{n-1}$-module structure on $G'$ defined by  $c^h=c^{(h_1\cdots h_{n-1})^{-1}}$; since $G'$ is abelian, this is indeed a group action. A similar consideration as in Remark \ref{remSplit1} can be used to obtain a (quite technical) criterion for splitness.
\end{remark}

\begin{corollary}\label{cor:nc2}
If $H$ has nilpotency class at most 2, then $K(H,n)\cong H^{n-1}.H'$ with multiplication
\[(g;c)(h,d)=(gh;cd\prod\nolimits_{i=1}^{n-1}\prod\nolimits_{j=i}^{n-1} [g_i,h_j]).\]
\end{corollary}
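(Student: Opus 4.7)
The plan is to derive the formula directly from Proposition \ref{prop:Kt}(a), whose general multiplication law $(g;c)(h;d)=(gh;\mu(g,h)c^h d)$ collapses drastically in the class-two setting. Since $H$ has class at most $2$, the derived subgroup $H'$ lies in $Z(H)$, so the twisted action is trivial ($c^h=c$) and $H'$ is abelian. The product therefore becomes $(gh;cd\cdot\mu(g,h))$, and the corollary reduces entirely to proving
\[\mu(g,h)=\prod_{i=1}^{n-1}\prod_{j=i}^{n-1}[g_i,h_j]\quad \text{in } H'.\]

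To establish this identity I would rely on three standard facts available in a class-two group: the commutator is bilinear ($[ab,c]=[a,c][b,c]$ and $[a,bc]=[a,b][a,c]$), one has $[a,b^{-1}]=[a,b]^{-1}$, and the rewrite $ab=ba[a,b]$ has a central correction term that may be freely collected. The first step is to move every $h_i$ in the product $(g_1h_1)(g_2h_2)\cdots(g_{n-1}h_{n-1})$ past every $g_j$ to its right; each swap contributes a central factor $[h_i,g_j]$ for $i<j$, so
\[(g_1h_1)\cdots(g_{n-1}h_{n-1})=(g_1\cdots g_{n-1})(h_1\cdots h_{n-1})\prod_{1\le i<j\le n-1}[h_i,g_j].\]

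Setting $u=g_1\cdots g_{n-1}$ and $v=h_1\cdots h_{n-1}$, the definition of $\mu(g,h)$ then becomes
\[\mu(g,h)=uvu^{-1}v^{-1}\prod_{i<j}[h_i,g_j]=[u,v]\prod_{i<j}[h_i,g_j],\]
using the class-two identity $uvu^{-1}v^{-1}=[u,v]$. Expanding $[u,v]=\prod_{i,j=1}^{n-1}[g_i,h_j]$ by bilinearity and applying $[h_i,g_j]=[g_j,h_i]^{-1}$, the $(j,i)$-factors of $[u,v]$ with $j>i$ cancel exactly against the correction product, leaving precisely the factors $[g_i,h_j]$ with $i\le j$, as required.

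The main (and essentially the only) obstacle here is the combinatorial bookkeeping of commutators; no deeper idea enters, since the centrality of $H'$ turns all reorderings into harmless rearrangements of central elements.
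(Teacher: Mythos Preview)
Your proof is correct and follows essentially the same route as the paper: both start from Proposition~\ref{prop:Kt}(a), use $H'\leq Z(H)$ to trivialise the conjugation $c^h=c$, and then simplify $\mu(g,h)$ via class-two commutator identities. The paper compresses the entire commutator calculation into the single identity $[h,g^{-1}]=[g,h]$ (valid in class~$2$), whereas you spell out the rearrangement explicitly via $ab=ba[a,b]$ and bilinearity; the content is the same.
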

\begin{proof}
This follows from the formula given in Proposition \ref{prop:Kt}a), together with  $c,d\in H'\leq Z(H)$ and  $[h,g^{-1}]=[h,g^{-1}]^{(g^{h})}=[g,h]$ for all $g,h\in G$.
\end{proof}
 
\subsection{Abelian groups} 
For a group $G$ let $Z^\wedge(G)=\{ g\in G: g\wedge x=1 \hbox{ for all } x\in G\}$ be the \emph{epicentre} of $G$. Note that $Z^\wedge (G)$ is equal to the projection of the center of a Schur cover of $G$ on $G$, see \cite[p.\ 254]{Ell95}, therefore the next result agrees with \cite[Proposition~4.7]{Lie08}. It is shown in \cite[Proposition 16(vii)]{Ell95} that  there exists $H$ with  $H/Z(H)\cong G$ if and only if $Z^\wedge(G)=1$.
\begin{proposition}\label{prop:ab3}
If $G$ is an abelian group, then $\tilde K(G,n)$ is isomorphic to the group $G^{n-1}.(G\wedge G)$ with multiplication
\[(g;c)(h;d)=(gh;cd\prod\nolimits_{i=1}^{n-1}g_i\wedge (h_i\cdots h_{n-1})). \]
Under this identification, 
\begin{eqnarray*} 
Z(\tilde{K}(G,n))&=&\{(u,uy_2,\ldots,uy_{n-1};c)\in G^{n-1}.(G\wedge G): y_2,\ldots,y_{n-1},u^n\in Z^\wedge(G)\}\\[2ex]
&\cong & Z^\wedge(G)^{n-2}\times (G\wedge G)\times \{u\in G: u^n\in Z^\wedge(G)\}.
\end{eqnarray*}
\end{proposition}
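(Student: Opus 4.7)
The proof naturally splits into three steps. First, I would derive the explicit multiplication on $\tilde K(G,n)$. Since $G$ is abelian, any Schur cover $H$ satisfies $H'\leq M\leq Z(H)$, so $H$ has nilpotency class at most $2$. Applying Corollary \ref{cor:nc2} to $K(H,n)$, passing to the quotient $\tilde K(G,n)\cong K(H,n)/K(M,n)$ via Proposition \ref{prop:Kt}b), and using the identification $H'\cong G\wedge G$ with $[g',h']\leftrightarrow g\wedge h$, the product $\prod_{i\leq j}[g_i,h_j]$ rewrites as $\prod_{i\leq j}g_i\wedge h_j=\prod_i g_i\wedge(h_i\cdots h_{n-1})$ by bilinearity of $\wedge$ in the abelian group $G\wedge G$. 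This yields the stated multiplication.

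Second, I would compute the center. Because $G$ and $G\wedge G$ are abelian, the condition $(g;c)(h;d)=(h;d)(g;c)$ reduces to the wedge identity
\[\prod_{i=1}^{n-1}g_i\wedge(h_i\cdots h_{n-1})=\prod_{i=1}^{n-1}h_i\wedge(g_i\cdots g_{n-1})\quad\text{for all }h\in G^{n-1}.\]
Expanding both sides by bilinearity and comparing the coefficient of each $h_k$, this is equivalent to $g_1\cdots g_{n-1}g_k\in Z^\wedge(G)$ for each $k=1,\ldots,n-1$. Setting $u=g_1$ and $y_k=g_ku^{-1}$ (with $y_1=1$), taking ratios of these conditions gives $y_k\in Z^\wedge(G)$ for $k\geq 2$, and the case $k=1$ then forces $u^n\in Z^\wedge(G)$. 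The converse is immediate by the same bilinearity argument, establishing the description of $Z(\tilde K(G,n))$.

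Third, I would identify the isomorphism type. The projection $(u,uy_2,\ldots,uy_{n-1};c)\mapsto u$ presents the centre as an extension of $T=\{u\in G:u^n\in Z^\wedge(G)\}$ by $Z^\wedge(G)^{n-2}\times(G\wedge G)$, and a direct calculation, using $y_i$ and $z_i\cdots z_{n-1}\in Z^\wedge(G)$ to annihilate most wedge terms, yields the product
\[(u,uy_2,\ldots;c)(v,vz_2,\ldots;c')=(uv,uv(y_2z_2),\ldots;cc'(u\wedge v)^{\binom{n}{2}}).\]
To split this extension, I would decompose $T=\langle t_1\rangle\times\cdots\times\langle t_r\rangle$ into cyclic factors, with $t_i$ of order $m_i$, and define $f\colon T\to G\wedge G$ by $f(t_1^{x_1}\cdots t_r^{x_r})=\prod_{i<j}(t_i\wedge t_j)^{\binom{n}{2}x_ix_j}$; well-definedness follows from $(t_i\wedge t_j)^{m_i}=t_i^{m_i}\wedge t_j=1$. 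Expanding both sides, the cocycle identity $f(uv)=f(u)f(v)(u\wedge v)^{\binom{n}{2}}$ reduces to $(t_i\wedge t_j)^{n(n-1)x_jy_i}=1$ for all $i<j$ and $x_j,y_i\in\Z$, which holds since $t_i\in T$ gives $(t_i\wedge t_j)^n=t_i^n\wedge t_j=1$. The modified bijection $(u,uy_2,\ldots;c)\mapsto(y_2,\ldots,y_{n-1};cf(u)^{-1};u)$ is then the desired group isomorphism onto $Z^\wedge(G)^{n-2}\times(G\wedge G)\times T$.

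The main obstacle is the splitting in the last step: the cocycle $(u\wedge v)^{\binom{n}{2}}$ is bilinear and antisymmetric, hence not obviously a coboundary. The decisive observation is the identity $2\binom{n}{2}=n(n-1)$, which together with the defining condition $t_i^n\in Z^\wedge(G)$ forces the symmetric and antisymmetric parts of the cocycle to coincide in $G\wedge G$, allowing the quadratic-type cochain $f$ above to realise it as a coboundary.
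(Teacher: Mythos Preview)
Your first two steps match the paper's proof essentially verbatim: the multiplication formula is obtained via Corollary~\ref{cor:nc2} and Proposition~\ref{prop:Kt}b) together with the identification $H'\cong G\wedge G$, and the centre is extracted by testing against tuples $h$ with a single nontrivial entry, yielding the conditions $g_1\cdots g_{n-1}g_k\in Z^\wedge(G)$ (the paper's $z_k$), from which the parametrisation by $(u,y_2,\dots,y_{n-1})$ follows by taking ratios.

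Where you differ is in the third step. The paper does not justify the claimed isomorphism type of the centre at all; it simply records it. You supply an actual argument: you compute that the induced $2$-cocycle on $T=\{u\in G:u^n\in Z^\wedge(G)\}$ is $(u,v)\mapsto (u\wedge v)^{\binom{n}{2}}$, and then trivialise it by the explicit cochain $f(\prod t_i^{x_i})=\prod_{i<j}(t_i\wedge t_j)^{\binom{n}{2}x_ix_j}$. The verification reduces to $(t_i\wedge t_j)^{n(n-1)}=1$, which holds because $t_i^n\in Z^\wedge(G)$ forces $(t_i\wedge t_j)^n=1$. This is correct and fills a gap the paper leaves open; the key observation that $2\binom{n}{2}=n(n-1)$ combined with the defining property of $T$ is exactly what makes the antisymmetric cocycle cohomologous to zero.
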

\begin{proof}
Let $H$ be a Schur cover of $G$ with $H/M=G$. It follows from Corollary \ref{cor:nc2} and Proposition~\ref{prop:Kt}b) that $\tilde K(G,n)\cong G^{n-1}.H'$ with multiplication
\[(g;c)(h,d)=(gh;cd\prod\nolimits_{i=1}^{n-1}\prod\nolimits_{j=i}^{n-1} [g_i',h_j']),\]
where each $g_i'$ and $h_j'$ is a lift of $g_i,h_j\in G$ to $H$; note that  $H'=M\leq Z(H)$ and $H'=M\cong G\wedge G$ since $G$ is abelian. Recall that  $G\wedge G=\ker c_\tau$, that is, $G\wedge G=\langle g\wedge h : g,h\in G\rangle$ with the convention $g\wedge h=[g,\ic{h}]_{\tau(G)}$. In particular, if $[g',h']_H\in H$ where $g',h'\in H$ are lifts of $g,h\in G$, then  $H'\cong G\wedge G$ via $[g',h']\mapsto g\wedge h$. The first claim follows. 

If  $(a;c)\in Z(\tilde K(G,n))$, then the following holds  for all $(g;d)\in \tilde K(G,n)$:
\begin{eqnarray*}
\prod\nolimits_{i=1}^{n-1} a_i\wedge g_i\cdots g_{n-1} = \prod\nolimits_{i=1}^{n-1} g_i\wedge a_i\cdots a_{n-1}.
\end{eqnarray*}
Considering $g$ with  only one nontrivial entry $g_i=h$, this forces
\[a_1\ldots a_{i-1}a_i^2 a_{i+1}\ldots a_{n-1}\wedge h = 1 \quad\text{for all $h\in G$ and $i\in\{1,\ldots,n-1\}$}.\]
Write $z_i=a_1\ldots a_{i-1}a_i^2 a_{i+1}\ldots a_{n-1}$ and note that each $z_i \in Z^\wedge(G)$; now for $i=2,\ldots,n-1$ we have $z_{i-1}^{-1}z_i=a_{i-1}^{-1}a_i\in Z^\wedge(G)$, so $a_i=a_1y_i$ for some $y_i\in Z^\wedge(G)$. Now $z_1\in Z^\wedge(G)$ yields $a_1^n\in Z^\wedge(G)$. Conversely, it is easy to check that every such element yields a central $(a;c)$.
\end{proof}

\begin{proposition}
\label{prop:tauab}
If $G$ is an abelian group, then $\tau (G)$ is isomorphic to the group $G^2.(G\wedge G)$, where the multiplication is given by $(g_1,g_2;c)(h_1,h_2;d)=(g_1h_1,g_2h_2;cd(g_2\wedge h_1))$. Under this identification,
$Z(\tau (G))=\{ (a,b;c): a,b\in Z^\wedge(G), c\in G\wedge G\}\cong Z^\wedge(G)^2\times (G\wedge G).$
\end{proposition}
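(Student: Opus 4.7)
The plan is to derive both assertions by specialising Proposition~\ref{prop:stau} to the abelian case, where the multiplication formula collapses considerably, and then directly analysing the commutation condition.

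First I would start from the multiplication $(a,b;c)(g,h;d)=(b^h\wedge g^h)c^{gh}d$ given by Proposition~\ref{prop:stau}. Since $G$ is abelian, the inner conjugations in $G$ are trivial, so $b^h=b$ and $g^h=g$. Moreover, by Remark~\ref{remCoc1}, the $G^2$-action on $G\wedge G$ is induced by $(u\wedge v)^{(g,h)}=u^{gh}\wedge v^{gh}$, which is also trivial because $u^{gh}=u$ and $v^{gh}=v$ in the abelian $G$; hence $c^{gh}=c$. This reduces the multiplication to $(g_1,g_2;c)(h_1,h_2;d)=(g_1h_1,g_2h_2;cd(g_2\wedge h_1))$, where we also use that $G\wedge G$ is abelian (recall from Section~\ref{secSM} that for an abelian $G$ with Schur cover $H$ one has $G\wedge G\cong H'\leq Z(H)$, so in particular $G\wedge G$ is abelian, and we may freely rearrange the three factors in the third component).

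Next I would compute $Z(\tau(G))$. Given $(a,b;c)\in\tau(G)$, comparing $(a,b;c)(g,h;d)$ with $(g,h;d)(a,b;c)$ using the multiplication just established and the commutativity of $G\wedge G$ shows that $(a,b;c)$ is central if and only if $b\wedge g=h\wedge a$ for all $g,h\in G$. Setting $h=1$ forces $b\wedge g=1$ for every $g\in G$, that is, $b\in Z^\wedge(G)$; setting $g=1$ forces $a\wedge h=1$ for every $h\in G$ (using $(h\wedge a)^{-1}=a\wedge h$), that is, $a\in Z^\wedge(G)$. Conversely, if $a,b\in Z^\wedge(G)$ then both $b\wedge g$ and $h\wedge a$ are trivial for all $g,h\in G$, so $(a,b;c)$ commutes with every element for every choice of $c\in G\wedge G$. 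This establishes the set-theoretic description of $Z(\tau(G))$.

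Finally, restricted to $Z(\tau(G))$ the multiplication becomes componentwise: for $(a,b;c),(a',b';c')\in Z(\tau(G))$ we have $(a,b;c)(a',b';c')=(aa',bb';cc'(b\wedge a'))=(aa';bb';cc')$ because $b\in Z^\wedge(G)$ makes the twist $b\wedge a'$ trivial. Hence $(a,b;c)\mapsto(a,b,c)$ provides the isomorphism $Z(\tau(G))\cong Z^\wedge(G)^2\times(G\wedge G)$. The only nonroutine point in this plan is the justification that the $G^2$-action on $G\wedge G$ is trivial when $G$ is abelian, but this is immediate once the action is written in the form given in Remark~\ref{remCoc1}; everything else is straightforward bookkeeping.
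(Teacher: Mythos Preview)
Your proposal is correct and follows essentially the same approach as the paper: specialise Proposition~\ref{prop:stau} to the abelian case, derive the centrality condition $b\wedge g=h\wedge a$, and test it at $g=1$ and $h=1$ to pin down $a,b\in Z^\wedge(G)$. Your write-up is considerably more detailed than the paper's (which simply says ``follows from Proposition~\ref{prop:stau}'' and ``as above''), in particular spelling out why the $G^2$-action on $G\wedge G$ trivialises and why the centre carries the direct product structure, but there is no substantive difference in strategy.
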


\begin{proof}
  The first claim follows from Proposition \ref{prop:stau}. As above, $(a,b;c)\in Z(\tau(G))$ if and only if $b\wedge g=h\wedge a$ for all $g,h\in G$. If $g=1$, then $a\in Z^\wedge(G)$; if $h=1$, then $b\in Z^\wedge(G)$. Conversely, every such $(a,b;c)$ lies in the centre; the claim follows.
\end{proof}

\section{Relating $\tau(G)$ with $\tilde K(G,3)$ and $K(G,3)$}
\label{sec:tau}

The aim of this section is to relate $\tau(G)$ with $\tilde K(G,3)$. As a first step, we consider a construction of an epimorphism $\tau(G)\to K(G,3)$. Our construction requires an automorphism of $G$ which acts as inversion on the abelianisation of $G$.

\subsection{AI-automorphisms}
An automorphism $\alpha\in \Aut(G)$ of a group $G$ is an {\em AI-automorphism} if it induces the inversion automorphism on the abelianisation $G/G'$; this is not to be confused with an {\em IA-automorphism} introduced by Bachmuth (1966), which is an automorphism that induces the identity on the abelianisation. Clearly, the composition of two AI-automorphisms is an IA-automorphism; for abelian groups the only AI-automorphism is inversion.

\begin{example}
  \label{ex:ai}
  Let $F$ be a free group on $X$. The map $X\to X$ given by $x\mapsto x^{-1}$ for all $x\in X$ induces an AI-automorphism $\iota_F$ of $F$. If a group $G$ is given by a free presentation $G=F/N$ such that $\iota_F(N)=N$, then $\iota_F$ induces an AI-automorphism of $G$. Note that if $F/N$ is abelian, then $F'\leq N$, hence $\iota_F(N)=N$ and $\iota_F$ induces inversion on $G$.  If $\iota_F(N)\ne N$, then define $M=\iota_F(N)N\unlhd F$. By definition, $\iota_F(M)=M$, and $F/M$ is the largest quotient of $G$ on which $\iota_F$ induces an AI-automorphism. In particular, every group $G$ has such a quotient since $\iota_F$ induces inversion on $F/F'N\cong G/G'$. We give two examples. First, the dihedral group of order $2n$ can be defined as $D_{2n}=F/N$ where $F$ is free on $\{a,b\}$ and $N$ is the normal closure of $\{a^n,b^2,a^ba\}$. Clearly, $\iota_F(a^n)=(a^{-1})^n$ and $\iota_F(b^2)=b^{-2}$ lie in $N$; moreover, $(\iota_F(a^ba)^{-1})^b=(aa^{b^{-1}})^b=a^ba\in N$, hence $\iota_F$ induces an AI-automorphism on $F/N$. Second, consider $G=F/N$ where $F$ is free on $\{g,h\}$ and $N$ is the normal closure of $\{g^4,h^5,h^gh^2\}$, that is, $G$ is a semidirect product $C_4\ltimes C_5$. A direct computation (by hand or with GAP \cite{gap}) shows that $G$ does not admit an AI-automorphism, which implies that  $\iota_F(N)\ne N$. If $M$ is the normal closure of $\{g^4,h^5,(h^{-1})^{(g^{-1})}h^{-2},h^gh^2\}$, then $\iota_F(M)=M$, and  $G/M\cong C_4$ is the largest quotient of $G$ on which $\iota_F$ induces an AI-automorphism.
  \end{example}

\begin{example}
Let $\alpha\in\Aut(G)$ be an automorphism which inverts every element of a generating  set $X$ of $G$. Such an automorphism is called  {\em GI-automorphism} in \cite{Bos}, where GI can be interpreted as ``generator inversion''. (Originally, GI stands for ``generator-involutions'' because  $\langle\alpha\rangle\ltimes G$ is generated by involutions $\{(\alpha,x): x\in X\}$.) Clearly, every GI-automorphism is an AI-automorphism. The map $\iota_F$ in Example \ref{ex:ai} is an example. To give another example, consider the alternating group $\text{Alt}_n$ of rank $n\geq 3$: Conjugation by the transposition $(1\, 2)$ defines an automorphism $\alpha$ of $\text{Alt}_n$ that inverts every element of the generating set $\{(1\, 2\, 3),(1\, 2\, 4),\ldots,(1\, 2\, n)\}$; thus $\alpha$ is a GI- and AI-automorphism.
\end{example}

\subsection{An epimorphism}\label{secEpi} Suppose $G$ has an AI-automorphism $\alpha$; we use $\alpha$  to construct $K(G,3)$ as a quotient of $\tau(G)$. Note that the homomorphism
\[ G\star \ic{G}\to G^3,\quad g_1\ic{h_1}\ldots g_k\ic{h_k}\mapsto (g_1\ldots g_k,h_1\ldots h_k,\alpha(g_1h_1\ldots g_kh_k))\]
maps commutators $[x,\ic{x}]$ to 1; since the above map forgets ``$\ast$'', it also maps the relations of $\tau(G)$ to~1. Thus there is an induced homomorphism
\[ {\Phi_{\alpha}}\colon \tau(G)\to G^3.\]

Recall that the commutator map \[\kappa\colon G\wedge G=[G,\ic{G}]_{\tau(G)}\to G'\] has central kernel $H_2(G,\mathbb{Z})\cong M(G)$, see Section \ref{secSM}. We now show the following:

\begin{theorem}\label{thm:kernel}
  If $\alpha\in\Aut(G)$ is an AI-automorphism, then \[\im {\Phi_\alpha}=K(G,3)\quad\text{and}\quad \ker {\Phi_\alpha}=\ker\kappa\leq Z(\tau(G)).\]
\end{theorem}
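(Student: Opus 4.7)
The plan is to exploit the explicit description of $\tau(G)$ from Proposition \ref{prop:stau} and the fact that the commutator map $\kappa\colon G\wedge G\to G'$ controls the third coordinate of $\Phi_\alpha$.

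First I would compute $\Phi_\alpha$ on a normal form. By Lemma \ref{lem1}, any element of $\tau(G)$ is uniquely $g\ic{h}w$ with $g,h\in G$ and $w\in [G,\ic{G}]_{\tau(G)}=G\wedge G$. Writing $w$ as a product of commutators $\prod_i[x_i,\icb{y_i}]$ and tracking each factor through the definition of $\Phi_\alpha$, the commutator contribution to the first two coordinates cancels, and one obtains
\[
\Phi_\alpha(g,h;w)=\bigl(g,\,h,\,\alpha(gh\cdot\kappa(w))\bigr),
\]
where $\kappa(w)=\prod_i[x_i,y_i]\in G'$ under the identification of Remark \ref{remId}.

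For the image, I would argue both inclusions from this formula. Since $\alpha$ is an AI-automorphism, $\alpha(gh\cdot\kappa(w))\equiv (gh\cdot\kappa(w))^{-1}\equiv (gh)^{-1}\pmod{G'}$, so the third coordinate lies in $(gh)^{-1}G'$; hence $g\cdot h\cdot\alpha(gh\kappa(w))\in G'$, meaning $\im\Phi_\alpha\subseteq K(G,3)$. Conversely, given $(g,h,k)\in K(G,3)$, I need $w\in G\wedge G$ with $\alpha(gh\kappa(w))=k$, equivalently $\kappa(w)=(gh)^{-1}\alpha^{-1}(k)$. Since $\alpha$ acts by inversion on $G/G'$ (and so does $\alpha^{-1}$), the right hand side is congruent to $(gh)^{-1}k^{-1}$ modulo $G'$, which equals $(ghk)^{-1}$ up to the abelianisation — and $ghk\in G'$ by assumption. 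Hence $(gh)^{-1}\alpha^{-1}(k)\in G'$, and the surjectivity of $\kappa$ provides the required $w$. This gives $\im\Phi_\alpha=K(G,3)$.

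For the kernel, the explicit formula shows $\Phi_\alpha(g,h;w)=1$ forces $g=h=1$ and $\alpha(\kappa(w))=1$; since $\alpha$ is an automorphism this is equivalent to $\kappa(w)=1$, i.e.\ $w\in\ker\kappa$. Thus $\ker\Phi_\alpha=\{(1,1;w):w\in\ker\kappa\}$, which we identify with $\ker\kappa\cong H_2(G,\Z)$. Centrality in $\tau(G)$ is then immediate from Remark \ref{remKappa}, where it is recorded that $\ker\kappa$ is central in $\tau(G)$.

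The only subtle point is the identification $\kappa(w)\in G'$ appearing in the third coordinate formula; once that formula is pinned down, both claims fall out of routine manipulations using the AI-property and the surjectivity of $\kappa$. I do not anticipate a serious obstacle — the main care needed is in the bookkeeping that reduces $\Phi_\alpha$ on an arbitrary element to the clean form $(g,h,\alpha(gh\kappa(w)))$.
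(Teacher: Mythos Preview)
Your proposal is correct and follows essentially the same route as the paper: both arguments rest on the normal form of Lemma~\ref{lem1} together with the observation that the third coordinate of $\Phi_\alpha$ on $g\ic{h}w$ is governed by $\kappa(w)$, and both invoke Remark~\ref{remKappa} for centrality. Your version is slightly more streamlined in that you isolate the formula $\Phi_\alpha(g,h;w)=(g,h,\alpha(gh\,\kappa(w)))$ up front, whereas the paper establishes the image and kernel claims by separate ad~hoc computations that amount to the same thing.
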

\begin{proof}The inclusion $\im {\Phi_\alpha}\leq K(G,3)$  follows immediately from the definition and the fact that $\alpha$ is an AI-automorphism. If $(g,h,k)\in K(G,3)$, then $k=h^{-1}g^{-1}c$ for some $c\in G'$. Note that $ {\Phi_\alpha}$ maps $g\ic{h}$ to $(g,h,\alpha(gh))\in K(G,3)$, and  $\alpha(gh)=h^{-1}g^{-1}d$ for some $d\in G'$, thus
\[ {\Phi_\alpha}(g\ic{h})^{-1}\cdot (g,h,k)=(1,1,d^{-1}c);\]
now $d^{-1}c=\prod_i[x_i,y_i]\in G'$, and so $(1,1,d^{-1}c)= {\Phi_\alpha}(\prod_i[\alpha^{-1}(x_i),\icb{\alpha^{-1}(y_i)}])$. This shows that $(g,h,k)\in \im {\Phi_\alpha}$, thus $K(G,3)\leq \im {\Phi_\alpha}$. Now we consider the kernel. Note that \[\ker {\Phi_\alpha}=\{g_1\ic{h_1}\ldots g_k\ic{h_k}: g_1\cdots g_k=h_1\cdots h_k=(g_1h_1)\cdots(g_kh_k)=1\}.\]If $w=g_1\ic{h_1}\ldots g_k\ic{h_k}\in\ker {\Phi_\alpha}$, then use  Lemma \ref{lem1} to  rewrite $w=g_1\cdots g_k\icb{h_1\cdots h_k} w'=w'$ for some $w'=\prod_i[x_i,\ic{y_i}]\in [G,\ic{G}]_{\tau(G)}$; thus, $w=w'\in G\wedge G$ and  applying $\kappa$ yields $\kappa(w)=\kappa(w')=\prod_i [x_i,y_i]\in G$. Note that rewriting $w$ to $w'$ involves a sequence of commutator rules as in \eqref{eqIds}, replacing elements such as $\ic{a}b$ by $b\ic{a}[\ic{a},b]$, etc.  Obviously, this rewriting process can be reversed which yields a sequence of commutator rules that bring $w'$ back into the form  $w$. Applying this reversed process not to $w'$, but to $\prod_i [x_i,y_i]_G$, we obtain $g_1h_1\ldots g_kh_k\in G$, which is the element $w$ without the ``$\ast$''. Recall that $g_1h_1\ldots g_kh_k=1$ by assumption, which shows that  \[\kappa(w)=\kappa(w')=\prod\nolimits_i [x_i,y_i]=g_1h_1\ldots g_kh_k=1,\]so $w\in\ker\kappa$. Conversely, let $w\in\ker\kappa$, that is, $w=\prod_i[g_i,\ic{h_i}]\in [G,\ic{G}]_{\tau(G)}$ with $\prod_i [g_i,h_i]=1$. Writing $w$ as $w=\prod_i g_i^{-1}\icb{h_i^{-1}}g_i\ic{h_i}$ and applying $ {\Phi_\alpha}$ shows that \[{\Phi_\alpha}(w)=(1,1,\alpha([g_1,h_1]\ldots [g_k,h_k]))=(1,1,1),\] hence $\ker\kappa\leq \ker {\Phi_\alpha}$. In conclusion, $\ker {\Phi_\alpha}=\ker\kappa$. By  Remark~\ref{remKappa}, the group $\ker\Phi_\alpha$ is central.
\end{proof}

\begin{corollary}\label{cor:seq}
The existence of an AI-automorphism of $G$ yields  a central extension
\[\begin{tikzcd}
    1\arrow{r} & H_2(G,\Z) \arrow{r} & \tau(G) \arrow{r} & K(G,3) \arrow{r} & 1.
\end{tikzcd}\]
\end{corollary}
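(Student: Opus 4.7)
The plan is essentially to package Theorem \ref{thm:kernel} together with the identification of $\ker\kappa$ with $H_2(G,\mathbb{Z})$ recalled in Section \ref{secSM}.

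First, I would apply Theorem \ref{thm:kernel}: given an AI-automorphism $\alpha\in\Aut(G)$, the map $\Phi_\alpha\colon\tau(G)\to G^3$ has image $K(G,3)$ and kernel $\ker\kappa$, and this kernel is contained in $Z(\tau(G))$. So restricting the codomain to $K(G,3)$ yields a short exact sequence
\[
\begin{tikzcd}
1 \arrow{r} & \ker\kappa \arrow{r} & \tau(G) \arrow{r}{\Phi_\alpha} & K(G,3) \arrow{r} & 1,
\end{tikzcd}
\]
and the centrality of $\ker\kappa$ in $\tau(G)$ makes this a central extension.

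Next, I would identify $\ker\kappa$ with the Schur multiplier, using the facts summarised in Section \ref{secSM}. There we recalled that, for any Schur cover $H$ of $G$ with $H/M=G$, the commutator map fits in an exact sequence $1\to M\to G\wedge G \xrightarrow{\kappa} G'\to 1$, so $\ker\kappa\cong M$; and $M\cong M(G)\cong H_2(G,\mathbb{Z})$ by Schur's and Miller's theorems. Substituting $\ker\kappa\cong H_2(G,\mathbb{Z})$ into the sequence above gives the asserted central extension.

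There is essentially no obstacle here: the corollary is just a reformulation of Theorem \ref{thm:kernel} via the standard identification $\ker\kappa\cong H_2(G,\mathbb{Z})$, so the proof consists of citing Theorem \ref{thm:kernel} and the relevant paragraph of Section \ref{secSM}. The only tiny point worth stating explicitly is that the centrality of $\ker\Phi_\alpha$ in $\tau(G)$ (already noted in Theorem \ref{thm:kernel} via Remark \ref{remKappa}) is what upgrades the short exact sequence from merely normal to central.
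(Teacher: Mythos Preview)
Your proposal is correct and matches the paper's approach: the corollary is stated without a separate proof precisely because it is an immediate consequence of Theorem~\ref{thm:kernel} together with the identification $\ker\kappa\cong H_2(G,\Z)$ from Section~\ref{secSM}, exactly as you describe.
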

Together with Proposition \ref{prop:Kt}c), it seems natural to ask when $\tau(G)\cong \tilde K(G,3)$. We will see in Proposition \ref{propAllSylowCyclic} that the lack of AI-automorphisms may prevent this.

\subsection{A subgroup $T(G)$}\label{secTG}
Recall that $K(G,3)$ is the kernel of $G^3\to G/G',\; (g,h,k)\mapsto ghkG'$. We now consider the kernel
\[T(G)=\{(g,h,cgh): g,h\in G, c\in G'\}\]
of the homomorphism $\pi\colon G^3\to G/G'$, $(g,h,k)\mapsto ghk^{-1}G'$. We now provide a short alternative proof that $K(G,3)\cong \tau(G)/\ker\kappa$ if $G$ has an AI-automorphism, cf.\ Theorem \ref{thm:kernel}.

 \smallskip

\begin{lemma}\label{lemTG} 
  We have $T(G)\cong \tau(G)/\ker\kappa$. If $G$ has  AI-automorphisms, then $K(G,3)\cong T(G)$.
\end{lemma}
\begin{proof}
Recall that we  can identify  $\tau(G)/\ker\kappa=G^2.G'$  and  $K(G,3)=G^2.G'$ via Remark~\ref{remKappa} and  $(g,h,h^{-1}g^{-1}c) = (g,h;c)$, respectively, with the following multiplications
 \begin{align*} K(G,3): &\quad (a,b;c)(g,h;d)=(ag,bh;[a^{-1},g^{-1}][b^{-1}a^{-1},h^{-1}]^{g^{-1}}c^{(gh)^{-1}}d)\text{ and}\\
   \tau(G)/\ker\kappa: &\quad(a,b;c)(g,h;d)=(ag,bh;[b,g]^hc^{gh}d).
 \end{align*}
 Every element in $T(G)$ can be written as $(a,b,abc)$ for unique $a,b\in G$ and $c\in G'$. This allows us to identify $T(G)=G^2.G'$ via $(a,b,abc)=(a,b;c)$, and a short calculation confirms that the induced multiplication in $T(G)=G^2.G'$ is the same as for $\tau(G)/\ker\kappa=G^2.G'$, so $T(G)\cong \tau(G)/\ker\kappa$. 
If $\alpha$ is an AI-automorphism of $G$, then $\beta=1\times 1\times \alpha$ is an automorphism of $G^3$ that interchanges $K(G,3)$ and $T(G)$, hence $T(G)\cong K(G,3)$.
\end{proof}
Identifying $T(G)=\tau(G)/\ker\kappa$, the isomorphism $\beta\colon T(G)\to K(G,3)$ in the proof of Lemma \ref{lemTG} coincides with the isomorphism $\tau(G)/\ker G\cong K(G,3)$ induced by $\Phi_\alpha$ in Theorem \ref{thm:kernel}.

\section{Some isomorphisms}
\label{sec:isom}

\noindent
Our computations in Section \ref{sec:comput} suggest that $\tau(G)\cong \tilde K(G,3)$ only if $G$ admits an AI-automorphism, cf.\ Corollary  \ref{cor:seq}. As mentioned above, the lack of AI-automorphisms may prevent isomorphisms, but one may ask whether an AI-automorphism implies $\tau(G)\cong \tilde K(G,3)$. In general, the answer is no, as illustrated by Proposition \ref{prop:rank2}b) and Examples \ref{exNonI} and \ref{ex:cf}. However, there is strong evidence that $\tau(G)$ is closely related to $\tilde K(G, 3)$ when AI-automorphisms exists; the next theorem is a useful tool for establishing various isomorphisms.

\begin{theorem}\label{thm:epi2}
  Suppose $G$ has an AI-automorphism that lifts to an AI-automorphism of a Schur cover inverting the Schur multiplier.
  \begin{ithm}
   \item We have $\tilde{K}(G,3)\cong \tau(G)$.
   \item We have $\tilde K(G,3)\cong T(H)/T(M)$.
  \end{ithm}
\end{theorem}
\begin{proof}
\begin{iprf}  
\item Let $H$ be a Schur cover with $H/M=G$ and let $\alpha\in \Aut(H)$ be the induced AI-automorphism with $\alpha(m)=m^{-1}$ for all $m\in M$. Corollary \ref{cor:seq} shows that  $ {\Phi_\alpha}\colon \tau(H)\to K(H,3)$ is an epimorphism with kernel $H_2(H,\Z)$. It is shown in \cite[Theorem 3.2]{Lie08} that $\tilde K(G,n)$ is isomorphic to $K(H,n)/K(M,n)$, so we obtain an epimorphism $\tau(H)\to\tilde K(G,3)$. By Lemma \ref{lem:kerRoc},  the projection $H\to G$ yields a surjection $\gamma\colon \tau(H)\to\tau(G)$ with kernel  $(\langle M, \ic{M}\rangle[M,\ic{H}][H,\ic{M}])_{\tau(H)}$. We can construct an induced epimorphism $\tau(G)\to\tilde{K}(G,3)$ if $ {\Phi_\alpha}(\ker\gamma) \leq K(M,3)$. If $m\in M$, then $ {\Phi_\alpha}(m)=(m,1,\alpha(m))$, which lies in $K(M,3)$ since $\alpha(m)=m^{-1}$; similarly for $\ic{m}\in \ic{M}$. If $[m,\ic{h}]$ is a generator of $[M,\ic{H}]$, then this is mapped under $ {\Phi_\alpha}$ to $(1,1,\alpha([m,h]))=(1,1,1)$ since $M\leq Z(H)$; similarly for elements in $[H,\ic{M}]$. Thus, $ {\Phi_\alpha}(\ker\gamma) \leq K(M,3)$ and the claim follows.
\item We have $T(M)=\{(a,b,ab):a,b\in M\}$ and $K(M,3)=\{(a,b,a^{-1}b^{-1}):a,b\in M\}$. The isomorphism $T(H)\cong K(H,3)$ of Lemma \ref{lemTG} maps $T(M)$ onto $K(M,3)$; recall that $\alpha$ inverts $M$. This implies that  $\tilde K(G,3)\cong K(H,3)/K(M,3)\cong T(H)/T(M)$.
\end{iprf}
\end{proof}

\begin{remark}
  \begin{iprf} 
    \item If $G$ has an abelian Schur cover, say $H/M=G$, then $M\leq H'$ implies that $M=1$, so $G=H$ is cyclic, the assumptions of Theorem \ref{thm:epi2} are satisfied, and $\tau(G)\cong \tilde{K}(G,3)\cong K(G,3)$. 
   \item If a Schur cover $H$ of $G$ admits an AI-automorphism $\alpha$ that leaves $M$ invariant, then $\alpha$ induces an AI-automorphism of $G\cong H/M$ since $H/H'\cong G/G'$.   
   \item Based on Theorem \ref{thm:epi2} and example computations,  we conjecture that $\tau(G)\cong \tilde K(G,3)$ only if $G$ admits an AI-automorphism. The results that follow and Example \ref{ex:cf} below support this conjecture. A stronger conjecture would be that  $\tau(G)\cong \tilde K(G,3)$ if and only if  $G$ admits an AI-automorphism that lifts to an AI-automorphism of a Schur cover inverting the Schur multiplier. However, this is not true as can be shown by a direct computation with GAP: the group $G=C_4\times C_4$ has Schur multiplier $M\cong C_4$, has an AI-automorphism, and satisfies $\tau(G)\cong\tilde K(G,3)$; up to isomorphism $G$ has three Schur covers $H_1$, $H_2$, and $H_3$, with GAP SmallGroup id [64,18], [64,19], and [64,28], respectively. Each $H_i$ has a unique $M_i\leq H_i'\cap Z(H_i)$ with $M_i\cong M$ and $H_i/M_i\cong G$. Only $H_1$ and $H_2$ have AI-automorphisms, but all of those act trivially on $M$. A similar statement holds for the non-abelian $C_4\times (C_4\ltimes C_3)$ with GAP id [48,11]. This illustrates several things: First, whether or whether not an AI-automorphism of $G$ lifts to an AI-automorphism of a Schur cover depends on the isomorphism type of the Schur cover. Second, we can have $\tau(G)\cong \tilde K(G,3)$ even though there is no lift of an AI-automorphism of $G$ that inverts the Schur multiplier. 
  \end{iprf}
\end{remark}
 
\begin{corollary}\label{corExp2}
If $G$ is a group with $\exp(G/G'),\exp(M(G))\in\{1,2\}$, then  $\tau(G)\cong \tilde K(G,3)$.
\end{corollary}
\begin{proof}
  Let $H$ be a Schur cover of $G$ with $H/M\cong G$. Since $H/H'\cong G/G'$ and $\exp(M)\mid 2$,  the identity automorphism is an AI-automorphism inverting $M$.  Now  Theorem \ref{thm:epi2} proves the claim.
\end{proof}

The next result considers  the finite groups all whose Sylow subgroups are cyclic, see \cite[10.1.10]{Rob82}. Note that every group of square-free order has this property.
 
\begin{proposition}\label{propAllSylowCyclic}
 Let $G$ be a group all whose Sylow subgroups are cyclic, that is,  \[G=\langle a,b\mid b^n, a^m, a^b=a^r\rangle\cong C_n\ltimes C_m\] where $|G|=mn$ with $m$ odd, and $0\leq r<m$ with $r^n\equiv 1\bmod m$  and $\gcd(m,n(r-1))=1$. Then $G$ has trivial Schur multiplier, hence $\tilde K(G,3)=K(G,3)$, and the following hold.
  \begin{ithm}
  \item The group $G$ has AI-automorphisms if and only if $r^2\equiv 1\bmod m$.
   \item  If $G$ is square-free, then $G$ has AI-automorphisms if and only if $G$ has a cyclic $2'$-Hall subgroup.
  \item The group $G$ satisfies $\tilde K(G,3)\cong \tau(G)$ if and only if $G$ has AI-automorphisms.
  \end{ithm} 
\end{proposition}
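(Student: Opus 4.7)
The first claim, that $M(G)=1$, is the classical result that any finite group whose Sylow subgroups are all cyclic has trivial Schur multiplier (see e.g.\ \cite{Kar}); then $\tilde K(G,3)\cong K(G,3)$ follows from Proposition~\ref{prop:Kt}c). Using $[a,b]=a^{r-1}$ together with $\gcd(m,r-1)=1$ we obtain $G'=\langle a\rangle\cong C_m$ and $G/G'\cong C_n$.

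For part (a), if $G$ is abelian then inversion is an AI-automorphism. Assume $G$ is non-abelian, so $r\not\equiv 1\bmod m$. Any AI-automorphism $\alpha$ preserves the characteristic subgroup $G'=\langle a\rangle$, so $\alpha(a)=a^t$ with $\gcd(t,m)=1$; moreover $\alpha$ must invert the class of $b$ in $G/G'$, so $\alpha(b)=a^sb^{-1}$ for some $s$. Evaluating $\alpha(a^b)=\alpha(a)^{\alpha(b)}$ gives $a^{tr}=a^{tr^{-1}}$ (since $\alpha(b)^{-1}a^t\alpha(b)=ba^tb^{-1}=a^{tr^{-1}}$, using $bab^{-1}=a^{r^{-1}}$), so cancelling $t$ yields $r^2\equiv 1\bmod m$. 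Combined with $\gcd(m,r-1)=1$, this forces $m\mid r+1$, hence $r\equiv -1\bmod m$ and $\gcd(r+1,m)=m>1$; and then $r^n\equiv 1\bmod m$ with $m$ odd, $m>1$, forces $n$ even. Conversely, if these three conditions hold, then $\alpha(a)=a$, $\alpha(b)=b^{-1}$ defines a well-formed AI-automorphism, since $\alpha(b)^n=b^{-n}=1$ and $\alpha(a^b)=bab^{-1}=a^{r^{-1}}=a^r$.

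Part (b) is a translation of (a) into Hall-subgroup language under the square-free hypothesis. If $n$ is odd, then $G$ is its own $2'$-Hall subgroup, which is cyclic iff $G$ is abelian. If $n$ is even, write $n=2n'$ with $n'$ odd and note that $\langle a,b^2\rangle\cong C_{n'}\ltimes C_m$ is the $2'$-Hall subgroup (as $b^2$ has odd order $n'$), with $b^2$ acting as $r^2$, so it is cyclic precisely when $r^2\equiv 1\bmod m$. Both equivalences align with the conditions from part (a).

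For part (c), the forward direction follows immediately from Theorem~\ref{thm:epi2}: since $M(G)=1$, the group $G$ is its own Schur cover and the condition of inverting the multiplier is vacuous, so any AI-automorphism of $G$ yields $\tau(G)\cong\tilde K(G,3)$. The converse is the main obstacle. The plan is to use Propositions~\ref{prop:stau} and \ref{prop:Kt}a) to realise both groups as extensions $1\to G'\to\cdot\to G^2\to 1$ after identifying $G\wedge G\cong G'$, and to compare the induced $G^2$-actions on $G'$: conjugation by $h_1h_2$ in $\tau(G)$, versus conjugation by $(h_1h_2)^{-1}$ in $K(G,3)$. Assuming $\tau(G)\cong K(G,3)$, an isomorphism must intertwine these actions up to automorphisms of $G'$ and $G^2$, and running through the (very restricted) automorphism groups of $G'\cong C_m$ and $G^2/(G'\times G')\cong C_n^2$ forces conjugation by $g$ and by $g^{-1}$ to act identically on $\langle a\rangle$ for all $g\in G$; in particular $b^2$ centralises $\langle a\rangle$, i.e.\ $r^2\equiv 1\bmod m$, so by (a) an AI-automorphism exists. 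The delicate step is justifying that an arbitrary group isomorphism respects the extension structure on both sides; this requires identifying the subgroup $G'$ intrinsically, for instance as the unique minimal normal subgroup whose quotient is a two-generated abelian group of exponent dividing $n$, so that the kernel of the extension can be transported along the isomorphism before the action-comparison argument is applied.
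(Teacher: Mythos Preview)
Your arguments for parts (a) and (b), and the forward implication in (c), are correct and essentially match the paper's proof. The substantive divergence is in the converse of (c), and there your argument has a real gap.

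You set up the comparison using the extensions $1\to G'\to\tau(G)\to G^2\to 1$ and $1\to G'\to K(G,3)\to G^2\to 1$ coming from Propositions~\ref{prop:stau} and \ref{prop:Kt}a), with kernel $G\wedge G\cong G'\cong C_m$. But this kernel is not known to be characteristic, and your proposed intrinsic description --- ``the unique minimal normal subgroup whose quotient is a two-generated abelian group of exponent dividing $n$'' --- does not pick it out: the quotient $\tau(G)/(G\wedge G)\cong G^2$ is \emph{not} abelian when $G$ is non-abelian (precisely the case at issue). What your description actually singles out is the derived subgroup $\tau(G)'$, which is $C_m^3$, not $C_m$. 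So the action-comparison you sketch (one copy of $C_m$ acted on by $G^2$) does not sit on a subgroup preserved by an arbitrary isomorphism, and the ``running through the automorphism groups'' step is left hanging.

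The paper avoids this by working with the derived subgroups $T'\cong K'\cong C_m^3$, which are automatically characteristic. It writes down explicit generators for $T=\tau(G)$ and $K=K(G,3)$, computes the conjugation action of $T/T'\cong K/K'\cong C_n^2$ on $C_m^3$ as matrices $m_1,m_2$ (for $T$) and $n_1,n_2$ (for $K$) in $\GL_3(\Z/m\Z)$, and observes that $n_1,n_2$ lie in $\SL_3(\Z/m\Z)$ while $\det m_i=r^2$. An isomorphism $T\cong K$ forces $\langle m_1,m_2\rangle$ and $\langle n_1,n_2\rangle$ to be conjugate in $\GL_3(\Z/m\Z)$, hence $r^2\equiv 1\bmod m$, and then part (a) supplies the AI-automorphism. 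This is the missing idea: pass to the characteristic subgroup $T'$ (rank $3$, not rank $1$) and use a determinant obstruction.
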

\begin{proof}
 It follows from H\"older's classification \cite[10.1.10]{Rob82} that the finite groups all whose Sylow subgroups are cyclic are exactly the groups having a presentation as in the proposition.  It follows from \cite[Corollary 2.1.3]{Kap} that $G$ has trivial Schur multiplier, hence $\tilde K(G,3)=K(G,3)$ by definition. If $G$ is abelian, then $G$ is cyclic and Theorem \ref{thm:epi2} proves the claim where the AI-automorphism is inversion. The condition $\gcd(m,n(r-1))=1$ guarantees that $r\ne 1$, hence $G$ is abelian if and only if $r=0$ and $m=1$; note that in this case $r^2\equiv 1\bmod m$ holds trivially. Thus, in the following we  assume that $G$ is non-abelian, that is, $r>1$.
   \begin{iprf}
   \item  Note that $[a,b]=a^{r-1}$, so  $G'=\langle a\rangle$. If $G$ has an AI-automorphism, then there exist $u,v$ with $\gcd(u,m)=1$ such that $b^{-1}a^v$ and $a^u$ satisfy the relations of $b$ and $a$ in $G$. The conjugacy relation forces $a^b=a^{b^{-1}}$, that is, $r^2\equiv 1\bmod m$. Conversely, if $r^2\equiv 1\bmod m$, then  $(b,a)\mapsto (b^{-1},a)$ describes an AI-automorphism of $G$. 

 \item If $G$ is square-free with cyclic $2'$-Hall subgroup $V\cong C_{nm/2}$, then there is a subgroup $U\cong C_2$ with  $G=U\ltimes  V\cong C_2\ltimes C_{mn/2}$, see \cite[Ex.~1.3(13) and (9.1.2)]{Rob82}. In particular, $V$ is the unique Hall $2'$-subgroup, which shows that $V=\langle b^2,a\rangle$ and we can choose $U=\langle b^{n/2}\rangle$. Thus, by renaming the generators, we can assume that  $G=\langle a,b\mid a^m, b^2, a^b=a^r\rangle$ where $r^2\equiv 1\bmod m$, $m$ is odd, $0\leq r<m$, and $\gcd(m,r-1)=1$. Now by part a), the identity defines an AI-automorphism of $G$.  Conversely, if $G$ is square-free with AI-automorphisms, then a) implies that $G\cong \langle b^{n/2}\rangle \ltimes \langle b^2,a\rangle\cong C_2\ltimes C_{mn/2}$, so $G$ has a cyclic Hall $2'$-subgroup.

\item If $G$ has an AI-automorphism, then Theorem \ref{thm:epi2} proves that $\tau(G)\cong\tilde K(G,3)\cong K(G,3)$; recall that $M(G)=1$. Conversely, suppose that  $\tau(G)\cong \tilde K(G,3)=K(G,3)$; abbreviate  $T=\tau(G)$ and $K=K(G,3)$.
  If we interpret $T$ via Proposition \ref{prop:stau}, we get generators
  $y_1 = (b,1,1)$,
  $x_1 = (a,1,1)$,
  $y_2 = (1,b,1)$,
  $x_2 = (1,a,1)$,
  and $x_3 = (1,1,a)$, and it follows that $T'=\langle x_1,x_2,x_3\rangle\cong C_m^3$ and $T/T'=\langle y_1T',y_2T'\rangle$. The elements $y_iT'$  act on $T'$ from the right via  matrices
  $$m_1=\left ( \begin{smallmatrix}
  r & 0 & 0\\ 0 & 1 & r-1 \\ 0 & 0 & r\end{smallmatrix} \right )\quad\text{and}\quad m_2=\left ( \begin{smallmatrix}
    1 & 0 & r-1\\ 0 & r & 0 \\ 0 & 0 & r\end{smallmatrix} \right ),$$
    both given with respect to $x_1,x_2,x_3$. Similarly, $K$ is generated by
$\tilde{y}_1 = (b,1,b^{-1})$,
$\tilde{x}_1 = (a,1,1)$,
$\tilde{y}_2 = (1,b,b^{-1})$,
$\tilde{x}_2 = (1,a,1)$, and
$\tilde{x}_3 = (1,1,a)$, and it follows that  
$K'=\langle \tilde{x}_1,\tilde{x}_2,\tilde{x}_3\rangle\cong C_m^3$, and $K/K'=\langle \tilde{y}_1K',\tilde{y}_2K'\rangle$. Here the elements $\tilde{y}_i K'$ act  on $K'$ from the right via the matrices
  $$n_1=\left ( \begin{smallmatrix}
  r & 0 & 0\\ 0 & 1 & 0 \\ 0 & 0 & s\end{smallmatrix} \right )\quad\text{and}\quad
  n_2=\left ( \begin{smallmatrix}
  1 & 0 & 0\\ 0 & r & 0 \\ 0 & 0 & s\end{smallmatrix} \right ),$$
  where $s$ is the multiplicative inverse of $r$ modulo $m$.
  Now consider the subgroups $A=\langle m_1,m_2\rangle$ and $B=\langle n_1,n_2\rangle$ of $\GL_3(m)$. As $T$ and $K$ are isomorphic, it follows that $A$ and $B$ are conjugate in $\GL_3(m)$. Since $B$ is contained in $\SL_3(m)$, the same holds for $A$. This forces $r^2\equiv 1 \bmod m$, and now  part a) shows that $G$ admits an AI-automorphism.
  \end{iprf}
\end{proof}

\begin{proposition}\label{prop:es}
  Let $G$ be an extra-special $p$-group with $p$ odd.
  \begin{ithm}
  \item Let $\exp(G)=p$. If $|G|=p^3$, then  $\tau(G)\cong \tilde K(G,3)$; if $|G|=p^{2n+1}$ with $n\geq 2$, then there exist Schur covers of $G$ that admit AI-automorphisms, but none of these inverts the Schur multiplier. 
  \item If $\exp(G)=p^2$, then $G$ does not have an AI-automorphism.
  \item If $|G|=p^3$ and $\exp(G)=p^2$, then $\tau(G)\not\cong \tilde K(G,3)$.  
  \end{ithm} 
\end{proposition}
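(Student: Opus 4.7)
For (b), the plan is to derive a contradiction by computing $\alpha|_{G'}$ in two independent ways. Suppose $\alpha$ is an AI-automorphism of $G$, and write $G'=Z(G)=\langle z\rangle\cong C_p$. First, pick $x\in G$ of order $p^2$ and write $\alpha(x)=x^{-1}c$ with $c\in G'\leq Z(G)$; the class-2 structure gives $\alpha(x)^p=x^{-p}c^p=x^{-p}$, so $\alpha$ acts as inversion on the nontrivial element $x^p\in G'$, and hence $\alpha|_{G'}$ is inversion. On the other hand, choose generators $a,b$ with $[a,b]=z$ and write $\alpha(a)=a^{-1}c_a$, $\alpha(b)=b^{-1}c_b$ with central $c_a,c_b$; the class-2 commutator identity yields $\alpha(z)=[a^{-1},b^{-1}]=[a,b]^{(-1)(-1)}=z$. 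Combining the two gives $z^2=1$, contradicting $p$ odd.

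For (a) with $|G|=p^3$, I would realise a Schur cover as $H$, the free nilpotent group of class 3 and exponent $p$ on two generators $x_1,x_2$. Basic commutator counting (Witt's formula) gives $|H|=p^5$, $H'=\gamma_2(H)\cong C_p^3$, and $Z(H)=\gamma_3(H)\cong C_p^2$ with basis $\{[x_1,[x_1,x_2]],[x_2,[x_1,x_2]]\}$; since $H/Z(H)\cong G$ and $|Z(H)|=|M(G)|$, this makes $H$ a Schur cover with $M=Z(H)$. The map $x_i\mapsto x_i^{-1}$ extends to an AI-automorphism $\alpha$ of $H$ by freeness in the variety of class-3 exponent-$p$ groups. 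The key computation uses $[x_i^{-1},w]=[x_i,w]^{-x_i^{-1}}$: since each generator of $M$ is central in $H$, this reduces to $[x_i,[x_1,x_2]]^{-1}$, and central correction terms arising from $\alpha([x_1,x_2])=[x_1,x_2]\cdot z'$ (with $z'\in Z(H)$) drop out of the outer commutator. Hence $\alpha$ inverts $M$, and Theorem~\ref{thm:epi2} yields $\tau(G)\cong\tilde{K}(G,3)$.

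For (a) with $n\geq 2$, I would take $H$ to be the free nilpotent group of class 2 and exponent $p$ on $2n$ generators $a_1,b_1,\ldots,a_n,b_n$, so $H'=Z(H)$ is elementary abelian of rank $\binom{2n}{2}$. Let $M\leq H'$ be generated by $\{[a_i,b_i][a_1,b_1]^{-1}:i\geq 2\}\cup\{[a_i,b_j]:i\neq j\}\cup\{[a_i,a_j]:i<j\}\cup\{[b_i,b_j]:i<j\}$; a dimension count gives $|M|=p^{2n^2-n-1}=|M(G)|$, $H/M\cong G$, and $M\leq H'\cap Z(H)$, so $H$ is a Schur cover. The map $a_i\mapsto a_i^{-1}$, $b_i\mapsto b_i^{-1}$ extends to an AI-automorphism by freeness. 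However, for any AI-automorphism $\beta$ of $H$ with $\beta(a_i)=a_i^{-1}c_i$ and $c_i\in H'=Z(H)$, the central factors cancel and the class-2 identity gives $\beta([a_1,a_2])=[a_1^{-1},a_2^{-1}]=[a_1,a_2]$. Since $[a_1,a_2]\in M$ has order $p>2$, $\beta$ fixes this element and cannot invert $M$. Functoriality of $M(G)\hookrightarrow G\wedge G$ extends the obstruction to \emph{any} Schur cover, as the action of a lifted automorphism on $M$ is determined by the induced AI-automorphism of $G$.

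The main obstacle is the last step for $n\geq 2$: ruling out an inversion-realising AI-automorphism not just for the canonical cover above but for every Schur cover. This rests on the functoriality of $M(G)$, which forces the action on $M$ to be computed intrinsically in $G$. The structural contrast behind the dichotomy is worth emphasising: when $n=1$, the Schur cover is forced into class 3 and $M$ consists of triple commutators that acquire a sign under inversion of generators; when $n\geq 2$, $M$ inevitably contains the double commutators $[a_i,a_j]$, which every AI-automorphism leaves fixed.
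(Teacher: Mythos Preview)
Your argument for (b) is correct and is the paper's argument.

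For (a) with $|G|=p^3$ there is a genuine gap at $p=3$: the free group of nilpotency class $3$ and exponent $3$ on two generators is $B(2,3)$, which has order $3^3$, not $3^5$; Witt's basic-commutator count gives the order of the free class-$c$ exponent-$p$ group only when $c<p$. The paper instead takes $H={\rm pc}\langle g_1,g_2,c,h_1,h_2\mid g_i^p,c^p,h_i^p,\,[g_2,g_1]=c,\,[c,g_i]=h_i\rangle$, which has order $p^5$ for every odd $p$; for $p=3$ this group does \emph{not} have exponent $p$. Your computation that the generator-inverting automorphism inverts $\gamma_3(H)=M$ works verbatim in this $H$, so the repair is just to drop ``exponent $p$'' from the description of $H$.

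For (a) with $n\geq 2$ your cover and its AI-automorphism coincide with the paper's, but the ``no inversion'' step takes a different route. The paper invokes a theorem of Beyl--Tappe that \emph{every} Schur cover of $G$ is unicentral, whence $H'=Z(H)$ in any cover; the class-$2$ identity $\alpha([g,h])=[g^{-1},h^{-1}]=[g,h]$ then holds directly in an arbitrary cover, and one only needs some nontrivial commutator to lie in $M$. Your functoriality argument is correct in principle, but two points are left implicit: that the restriction of an AI-automorphism of an arbitrary cover to its copy of $M(G)$ agrees with the naturally induced action of the corresponding AI-automorphism of $G$ on $M(G)\subseteq G\wedge G$; and that every AI-automorphism of $G$ lifts to your specific free cover (true by freeness, but worth stating, since otherwise your fixed-point computation covers only some AI-automorphisms of $G$). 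The Beyl--Tappe shortcut bypasses both issues.
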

\begin{proof} 
  \begin{iprf}
  \item Let $G$ be an extra-special $p$-group of exponent $p$ and order $p^{2n+1}$. It follows from \cite[Satz~III.13.7]{Hup} that $G$ is a central product of $n$ extra-special groups of size $p^3$ and exponent $p$, that is, we can assume that $G={\rm pc}\langle g_1,\ldots,g_{2n},c\mid \forall i,j: [g_{2i},g_{2i-1}]=c^{-1},\; g_{j}^p=c^p=1\rangle$. First suppose that $n=1$. By  \cite[Theorem 3.3.6]{Kap}, the Schur multiplier is isomorphic to $C_p^2$, and it is straightforward to verify that the group\[H={\rm pc}\langle g_1,g_2,c,h_1,h_2\mid g_1^p,g_2^p,c^p,h_1^p,h_2^p, [g_2,g_1]=c^{-1}, [c,g_1]=h_1,[c,g_2]=h_2\rangle,\]
is a Schur cover of $G$ with $H/M=G$ for  $M=\langle h_1,h_2\rangle\cong C_p^2$. The elements $g_1^{-1}c,$ $g_2^{-1}c^{-1}$, $c$, $h_1^{-1}$, $h_2^{-1}$ satisfy the relations of $H$, so von Dyck's Theorem \cite{Rob82}*{2.2.1} shows that  $(g_1,g_2,c,h_1,h_2)\mapsto (g_1^{-1}c, g_2^{-1}c^{-1},c, h_1^{-1}, h_2^{-1})$ extends to an automorphism $\alpha$ of $H$. This is an AI-automorphism of $H$ that inverts elements of $M$, so  Theorem \ref{thm:epi2} proves the claim for $n=1$.

Now let $n>1$. Beyl and Tappe (1982) \cite[Theorem 3.3.6]{Kap} proved that  $M=M(G)$ is elementary abelian of rank $2n^2-n-1$ and that every Schur cover $H$ of $G$ with $H/M=G$ is unicentral, that is, $Z(H)$ is the full preimage of $Z(G)$ under the projection $H\to G$; in particular, we have $Z(G)=G'=H'/M$ and  $H'=Z(H)$.
Thus, if $g,h\in H$, then $\alpha([g,h])=[\alpha(g),\alpha(h)]=[g^{-1},h^{-1}]=[g,h]$, so $\alpha$ fixes $H'$ (and so $M\leq H'$) elementwise; in particular, $\alpha$ does not invert $M$.

An explicit Schur cover $H$ of $G$ can be defined by abstract generators $g_1,\ldots,g_{2n},c$ and $h_{i,j}$ for $1\leq i<j\leq n$ except $(i,j)=(1,2)$, all of order $p$, with each $h_{i,j}$ and $c$ central, and the following nontrivial commutator relations:  each commutator relation $[g_j,g_i]=w$ in $G$ with $i<j$ (except for $[g_2,g_1]$) becomes a relation $[g_i,g_j]=wh_{i,j}$ in $H$. Let $N$ be the subgroup  generated by all $h_{i,j}$; it follows from the construction that $N\leq Z(H)\cap H'$, that $Z(H)=\langle c,N\rangle$, and that  $H/N\cong G$. Standard {\it consistency checks} (see \cite[Section~8.7.2]{handbook}) can be used to show that this presentation is {\it consistent}: since every element has order $p$, the only tests that have to be carried out are for the equations $(g_ig_j)g_k=g_i(g_jg_k)$ with $k<j<i$, but all those lead to the conditions $h_{j,i}h_{k,i}h_{k,j}=h_{k,i}h_{j,i}h_{k,j}$ which are trivially satisfied. Consistency of the presentation implies $|H|=p^{2n+1+2n^2-n-1}$, so $H/N\cong G$ proves that  $N\cong C_p^{2n^2-n-1}$ is isomorphic to the Schur multiplier. This shows that $H$ is indeed a Schur cover of $G$. In particular, AI-automorphisms exist: take the isomorphism that is defined by mapping each generator $g_i$ to $g_i^{-1}$.
\item Let $G$ be  extra-special of order $p^{1+2n}$ with $Z(G)=\langle c\rangle=G'$. It follows from \cite[Satz~III.13.7]{Hup} that $G$ is a central product of $n$ extra-special groups of size $p^3$, at least one of them of exponent $p^2$. Thus, there are  $g,h\in G$ such that $\langle g,h,c\rangle$ is extraspecial of order $p^3$ and exponent $p^2$;  we can assume that $g^p=c$, $h^p=c^p=1$, and $[h,g]=c^{-1}$. If $\alpha\in\Aut(G)$ is an AI-automorphism, then $\alpha(c^{-1})=[\alpha(h),\alpha(g)]=[h^{-1},g^{-1}]=[h,g]^{(-1)^2}=c^{-1}$, so $\alpha(c)=c$. Now if $\alpha(g)=g^{-1}d$ with $d\in Z(G)$, then $c=\alpha(g)^p=g^{-p}d^p=g^{-p}=c^{-1}$ forces $|c|=2$, a contradiction.
\item We consider $G=\langle g,h,c\mid g^p=c, h^p, c^p , [h,g]=c^{-1}, [c,g], [c,h]\rangle$ with $M(G)=1$. Recall that $\tilde K(G,3)=K(G,3)$ and $\tau(G)\cong T(G)$, see Lemma \ref{lemTG}. We show that $T=T(G)$ and $K=K(G,3)$ are not isomorphic, which proves the claim. For $i\in\{1,2,3\}$ and $x\in G$ let $x_i$ be the element $x$ in the $i$-th copy of $G^3$. Note that $K$ is generated by $\{x_1x_3^{-1},x_2x_3^{-1},c_1,c_2,c_3: x\in G\}$, whereas $T$ is generated by $\{x_1x_3, x_2x_3, c_1,c_2,c_3: x\in G\}$.  One can show that $Z(K)=\langle c_1,c_2,c_3\rangle=Z(T)$ and $\mho(K)=\langle c_1c_3^{-1},c_2c_3^{-1}\rangle$ and $\mho(T)=\langle c_1c_3,c_2c_3\rangle$. Moreover, $\Omega(T)=\langle h_1h_3,h_2h_3,c_1,c_2,c_3\rangle$ and $\Omega(K)=\langle h_1h_3^{-1},h_2h_3^{-1},c_1,c_2,c_3\rangle$. Recall that $\mho$ and $\Omega$ denote the subgroups generated by $p$-th powers and elements of order $p$, respectively. Let $\mathcal{B}=\{c_1c_3^{-1},c_2c_3^{-1},c_3\}$ be a basis of the $\Z_p$-space $Z(K)$. The commutator map $K/\Omega(K)\times \Omega(K)/Z(K)\to Z(K)$ is induced by $[g_1g_3^{-1},h_1h_3^{-1}]=c_1c_3$, $[g_1g_3^{-1},h_2h_3^{-1}]=c_3$, $[g_2g_3^{-1},h_1h_3^{-1}]=c_3$, and $[g_2g_3^{-1},h_2h_3^{-1}]=c_2c_3$. Moreover, the power map $K/\Omega(K)\to \mho(K)\leq Z(K)$ satisfies $(g_1g_3^{-1})^p=c_1c_3^{-1}$ and $(g_2g_3^{-1})^p=c_2c_3^{-1}$. Note that with respect to $\mathcal{B}$, the element $c_1c_3$ is represented as $(1,0,2)$, etc. All together, these commutator and power maps are encoded by the  $\Z_p$-matrix $\mathcal{M}(K)$ below; analogously, we obtain the matrix $\mathcal{M}(T)$ with respect to the basis $\mathcal{B}'=\{c_1c_3,c_2c_3,c_3\}$ of the $\Z_p$-space $Z(T)$:
    \[\mathcal{M}(K)=\left(\begin{smallmatrix}
    1 &0 & 2  \\
    0 &0 & 1  \\
    0 &0 & 1  \\
    0 &1 & 2 
\end{smallmatrix}\middle|
\begin{smallmatrix}
   1 & 0 &0 \\
   1 & 0 &0 \\
    0 & 1 &0 \\
    0 & 1 &0 
  \end{smallmatrix}\right)\quad\quad \mathcal{M}(T)=\left(\begin{smallmatrix}
    1 &0 & 0 &  \\
    0 &0 & 1 & \\
    0 &0 & 1 &  \\
    0 &1 & 0 & 
  \end{smallmatrix}\middle|\begin{smallmatrix}
    1 & 0 &0 \\ 
    1 & 0 &0 \\
   0 & 1 &0 \\
   0 & 1 &0 
  \end{smallmatrix}\right).
  \]If $K\cong T$, then there exist $A,B\in\GL_2(p)$ and $C\in\GL_3(p)$ with $(A\otimes B)\cdot \mathcal{M}(T)=\mathcal{M}(K)\cdot (I_2\otimes C)$; since  $\mho(T)$ and $\mho(K)$ are characteristic subgroups, $C$ has entry $0$ in position $(1,3)$ and $(2,3)$. A straightforward but technical calculation shows that such $A,B,C$ cannot exist, thus, $T\not\cong K$.  
  \end{iprf} 
\end{proof}

\begin{remark}
  In general, deciding (non)-isomorphism for $\tau(G)$ and $\tilde K(G,3)$ seems to be an intricate matter since already for extra-special $G$ of order $3^5$, both $\tau(G)$ and $\tilde K(G,3)$ are extensions of $C_3^8$ by $C_3^8$. As explained in Section \ref{sec:comput}, even advanced computational group theory methods fail for such isomorphism tests. 
\end{remark}

Next, for $n\geq 1$ we consider the generalised quaternion group $Q_{4n}$ and dihedral group $D_{2n}$ of order $4n$ and $2n$, respectively, which are defined as
\[Q_{4n}=\langle a,b\mid a^{2n}, b^2=a^n, a^b=a^{-1}\rangle\quad\text{and}\quad D_{2n}=\langle a,b\mid a^{n}, b^2, a^b=a^{-1}\rangle.\]

\begin{proposition}\label{prop:Qn}
We have $\tau(Q_{4n})\cong\tilde K(Q_{4n},3)$ and $\tau(D_{2n})\cong \tilde K(D_{2n},3)$.
\end{proposition}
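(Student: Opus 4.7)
The plan is to apply Theorem \ref{thm:epi2} in both cases by exhibiting, for each $G$, a Schur cover together with an AI-automorphism that inverts the Schur multiplier.

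First I would handle $Q_{4n}$. The generalised quaternion group has trivial Schur multiplier (see \cite{Kap}), so $Q_{4n}$ itself serves as a Schur cover and the condition on inverting $M$ is vacuous. The candidate AI-automorphism is the map $\alpha\colon a\mapsto a^{-1},\ b\mapsto b^{-1}$ (compare $\iota_F$ in Example~\ref{ex:ai}). A direct check of the three defining relations of $Q_{4n}$ --- in particular $\alpha(b)^2 = b^{-2} = a^{-n} = \alpha(a^n)$ and $\alpha(a)^{\alpha(b)} = (a^{-1})^{b^{-1}} = a = \alpha(a^{-1})$ --- confirms by von Dyck's theorem that $\alpha$ extends to an automorphism; since it inverts both generators it is even a GI-automorphism, hence an AI-automorphism, and Theorem~\ref{thm:epi2} yields $\tau(Q_{4n}) \cong \tilde K(Q_{4n},3)$.

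For $D_{2n}$ the argument splits on the parity of $n$. If $n$ is odd, then $M(D_{2n}) = 1$ and $D_{2n}$ is its own Schur cover; the map $a\mapsto a^{-1},\ b\mapsto b$ is a GI-automorphism (and hence AI-automorphism), and Theorem~\ref{thm:epi2} applies with vacuous multiplier-inversion. If $n$ is even, then $M(D_{2n}) \cong C_2$, and I would use $Q_{4n}$ as the Schur cover: one checks that $\langle a^n\rangle$ is central in $Q_{4n}$, lies in $Q_{4n}' = \langle a^2\rangle$ precisely because $n$ is even, has order $2 = |M(D_{2n})|$, and satisfies $Q_{4n}/\langle a^n\rangle \cong D_{2n}$. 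The AI-automorphism $\alpha$ of $Q_{4n}$ from above sends $a^n$ to $a^{-n}$, which is its own inverse in the cyclic group $\langle a^n\rangle$ of order~$2$; so $\alpha$ inverts $M$, and Theorem~\ref{thm:epi2} gives $\tau(D_{2n}) \cong \tilde K(D_{2n},3)$.

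The only non-routine step is the identification of $Q_{4n}$ as a Schur cover of $D_{2n}$ when $n$ is even: this requires recalling the value of $M(D_{2n})$, exhibiting the central subgroup $\langle a^n\rangle \leq Q_{4n}' \cap Z(Q_{4n})$, and verifying that it has the correct size. Everything else reduces to inspection of defining relations, for which von Dyck's theorem does the heavy lifting.
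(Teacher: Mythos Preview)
Your proposal is correct and follows essentially the same route as the paper's proof: both arguments invoke Theorem~\ref{thm:epi2} by exhibiting the GI-automorphism $(a,b)\mapsto(a^{-1},b^{-1})$ on $Q_{4n}$ (which is its own Schur cover since $M(Q_{4n})=1$), and for $D_{2n}$ both split on the parity of $n$, using $Q_{4n}$ as the Schur cover when $n$ is even with $M=\langle a^n\rangle\cong C_2$ inverted by the same automorphism. Your write-up is in fact slightly more explicit than the paper's in verifying the relations and in checking that $\langle a^n\rangle\leq Q_{4n}'\cap Z(Q_{4n})$; the paper instead cites \cite[Proposition~2.11.4]{Kap} directly for the Schur cover of $D_{2n}$.
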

\begin{proof}
For $Q_4=C_4$ and $D_2=C_2$ the claim is obvious, so let $n\geq 2$.  It follows from \cite[Example 2.4.8]{Kap} that $M(Q_{4n})=1$. Note that $\{a^{-1},b^{-1}\}$ also satisfies the relations of $Q_{4n}$, so $(a,b)\mapsto (a^{-1},b^{-1})$ extends to a GI-automorphism of $Q_{4n}$ by von Dyck's Theorem. Now $\tau(Q_{4n})\cong \tilde K(Q_{4n},3)$ by Theorem \ref{thm:epi2}. Let $H$ be a Schur cover of $D_{2n}$ with $H/M=D_{2n}$. By \cite[Proposition 2.11.4]{Kap}, we have $M=1$ and $H=D_{2n}$ if $n$ is odd, and $M=C_2$ and $H=Q_{4n}$ otherwise. As seen above and in Example~\ref{ex:ai}, the group $H$ admits an AI-automorphism which fixed $M$ elementwise.  Again, the claim follows with  Theorem \ref{thm:epi2}.
\end{proof}

\begin{proposition}
We have $\tau({\rm Sym}_n)\cong \tilde K({\rm Sym}_n,3)$ and
$\tau({\rm Alt}_n)\cong \tilde K({\rm Alt}_n,3)$.
\end{proposition}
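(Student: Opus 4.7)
The plan is to apply Theorem \ref{thm:epi2} in every case: produce an AI-automorphism of $G$ that lifts to an AI-automorphism of a Schur cover $H$ acting as inversion on the Schur multiplier $M$.

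For $G=\mathrm{Sym}_n$ this is immediate. The cyclic cases $n\leq 2$ are covered by the first item of the remark after Theorem \ref{thm:epi2}. For $n\geq 3$, the abelianisation $\mathrm{Sym}_n/\mathrm{Alt}_n\cong C_2$ has exponent $2$, so the identity of $\mathrm{Sym}_n$ is trivially an AI-automorphism, and it lifts to the identity of any Schur cover $H$. Since $M(\mathrm{Sym}_n)$ is either trivial (for $n=3$) or isomorphic to $C_2$ (for $n\geq 4$), inversion coincides with the identity on $M$, so Theorem \ref{thm:epi2} applies.

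For $G=\mathrm{Alt}_n$, the cases $n\leq 3$ are cyclic and again handled by the remark. For $n\geq 4$ consider the automorphism $\alpha$ of $\mathrm{Alt}_n$ induced by conjugation with the transposition $(1,2)\in\mathrm{Sym}_n$. The identity $(1,2,k)^{(1,2)}=(1,k,2)=(1,2,k)^{-1}$ for $3\leq k\leq n$ shows that $\alpha$ inverts every element of the generating set $\{(1,2,k):3\leq k\leq n\}$, so $\alpha$ is a GI-automorphism and in particular an AI-automorphism. Since $\alpha$ arises from conjugation in $\mathrm{Sym}_n$, it lifts to an automorphism $\tilde\alpha$ of a Schur cover $H$ of $\mathrm{Alt}_n$; for $n\geq 5$ the cover $H$ is perfect and $\tilde\alpha$ is trivially an AI-automorphism, while for $n=4$ the cover $H=\mathrm{SL}_2(3)$ has abelianisation $C_3$ on which $\tilde\alpha$ induces inversion. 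Whenever $M$ has exponent $2$, which covers $n=4$ and all $n\geq 5$ with $n\notin\{6,7\}$, the automorphism $\tilde\alpha$ inverts $M$ automatically and Theorem \ref{thm:epi2} concludes.

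The main obstacle is the case $n\in\{6,7\}$, where $M\cong C_6=C_2\times C_3$ is not of exponent $2$. Here one must show that $\tilde\alpha$ acts as inversion on the order-$3$ summand of $H_2(\mathrm{Alt}_n,\mathbb{Z})$, and not only on the $C_2$-summand. Since $\alpha$ represents a non-trivial element of $\mathrm{Out}(\mathrm{Alt}_n)$, the induced action on $M$ is determined by functoriality of $H_2(-,\mathbb{Z})$; the required inversion on the $C_3$-summand can be verified using explicit presentations of the Schur covers $6.\mathrm{Alt}_6$ and $6.\mathrm{Alt}_7$ (composing, for $n=6$, with the exceptional outer automorphism if needed). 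Granting this analysis, Theorem \ref{thm:epi2} yields $\tau(\mathrm{Alt}_n)\cong\tilde K(\mathrm{Alt}_n,3)$ in the remaining cases as well.
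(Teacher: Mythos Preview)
Your treatment of $\mathrm{Sym}_n$ is correct and in fact cleaner than the paper's: rather than exhibiting an explicit GI-automorphism of the Schur cover $H_n$ via generator inversion, you observe that $H_n/H_n'\cong \mathrm{Sym}_n/\mathrm{Alt}_n\cong C_2$ has exponent~$2$, so the identity is already an AI-automorphism of $H_n$ and trivially inverts $M\cong C_2$. The same shortcut applies to $\mathrm{Alt}_n$ for $n\ge 5$ with $n\notin\{6,7\}$: the double cover is perfect and $M\cong C_2$, so again the identity automorphism suffices and the lift of conjugation by $(1\,2)$ is not needed there.

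There are, however, two genuine gaps. First, the assertion ``since $\alpha$ arises from conjugation in $\mathrm{Sym}_n$, it lifts to an automorphism of a Schur cover of $\mathrm{Alt}_n$'' is not automatic and requires argument; for $n\ge 5$ it follows from the universal property of the cover of a perfect group, but for $n=4$ (where $\mathrm{Alt}_4$ is not perfect) you still owe a verification that such a lift to $\mathrm{SL}_2(3)$ exists and is AI. Second, and more seriously, for $n\in\{6,7\}$ you explicitly write ``granting this analysis'' without carrying it out: you do not establish that any lift of $\alpha$ inverts the $C_3$-summand of $M\cong C_6$. This is precisely the delicate point, and the hedge about ``composing with the exceptional outer automorphism if needed'' for $n=6$ shows the argument is not pinned down.

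The paper avoids both difficulties by working directly with explicit presentations of the Schur covers from \cite[Theorem~2.12.5]{Kap}: for $n\notin\{6,7\}$ one takes $K_n=[H_n,H_n]$, and for $n\in\{6,7\}$ one uses the presentations given there. In each case the map sending every generator to its inverse is checked to be a GI-automorphism of the cover, and since $M$ is generated by (products of) these generators, inversion on $M$ is immediate. This bypasses any functoriality argument about how $\mathrm{Out}(\mathrm{Alt}_n)$ acts on $H_2$ and closes the $n\in\{6,7\}$ case cleanly.
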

\begin{proof}
  For $n\leq 3$ the claim can be verified directly, so let $n\geq 4$ in the following.   Schur (1911) proved that the Schur multiplier of ${\rm Sym}_n$ is cyclic of order 2 for $n\geq 4$, and trivial otherwise, see  \cite[Theorem 2.12.3]{Kap}. Now Corollary \ref{corExp2} proves the claim for ${\rm Sym}_n$. Similarly, if $n\notin\{4,6,7\}$, then the claim for ${\rm Alt}_n$ follows from \cite[Theorem 2.12.5]{Kap} and Corollary \ref{corExp2} for ${\rm Alt}_n$. The case $n=4$ can be checked directly, and if $n\in\{6,7\}$, then the outer automorphism extending ${\rm Alt}_n$ to ${\rm Sym}_n$ inverts the Schur multiplier: this also follows directly from the presentations given in    \cite[Theorem 2.12.5]{Kap}.
\end{proof}

The next result shows that Theorem \ref{thm:epi2} cannot be applied to abelian groups $G$ in general. Recall that if $M$ is a trivial $G$-module of exponent $2$, then a 2-coboundary $\delta\in B^2(G,M)$ is a function $G\times G\to M$ defined by a map $\kappa\colon G\to M$ with $\kappa(1)=1$ such that $\delta(g,h)=\kappa(gh)\kappa(g)\kappa(h)$ for all $g,h\in G$. In the following, for an abelian group $G$, we write $G=G_2\times G_{2'}$ where $G_2$ is the Sylow $2$-subgroup of~$G$.

\begin{proposition}\label{propabc}
Let $G$ be an abelian group with Schur cover $H$, say $H/M=G$. Then $H$ admits an AI-automorphism whose restriction to $M$ is inversion if and only if $G_{2'}$ is cyclic, $M$ has exponent dividing $2$, and the map $G_2\times G_2\to G_2\wedge G_2$ defined by $(g,h)\mapsto g\wedge h$ is a 2-coboundary; in particular, any such AI-automorphism has order dividing 2.
\end{proposition}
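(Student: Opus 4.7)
The plan is to encode a candidate automorphism $\alpha$ by a \emph{twist} $\kappa \colon H \to M$ defined via $\alpha(h) = h^{-1}\kappa(h)$. Since $G$ is abelian one has $H' = M$, so $H$ is nilpotent of class at most $2$; this makes $\kappa(h)$ central in $H$ and all commutators bilinear, so the group-theoretic conditions on $\alpha$ translate cleanly into a cohomological condition on the wedge map $G \times G \to G \wedge G$.

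First I separate the cyclic case: if $G$ is cyclic then $G \wedge G = 1$, $M = 1$ and $H = G$, so inversion on $G$ is the desired AI-automorphism and has order dividing $2$. Assume henceforth that $G$ is non-cyclic. For the forward direction, suppose $\alpha$ is AI with $\alpha|_M = \mathrm{inv}$. Expanding $\alpha(hg) = \alpha(h)\alpha(g)$, using centrality of $\kappa(h)$, and invoking the class-$2$ identity $[h^{-1},g^{-1}] = [h,g]$, yields the twisted cocycle relation
\[
\kappa(hg) \;=\; [h,g]\,\kappa(h)\,\kappa(g).
\]
The hypothesis $\alpha|_M = \mathrm{inv}$ is equivalent to $\kappa|_M = 1$, so $\kappa$ descends to $\bar\kappa \colon G \to M$. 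After identifying $M \cong G \wedge G$ via $[h',g'] \leftrightarrow \bar h \wedge \bar g$ (Section~\ref{secSM}), the relation becomes $\bar\kappa(gh) = (g\wedge h)\,\bar\kappa(g)\,\bar\kappa(h)$.

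Now comes the key structural step. Swapping $g$ and $h$ and using $G$ abelian gives $(g\wedge h)\bar\kappa(g)\bar\kappa(h) = (h\wedge g)\bar\kappa(h)\bar\kappa(g)$; since $G\wedge G$ is abelian this forces $g \wedge h = h \wedge g$, and combined with the antisymmetry $g \wedge h = (h\wedge g)^{-1}$ of the exterior square we deduce $(g \wedge h)^2 = 1$. As $M$ is generated by pure wedges this shows $M$ has exponent $2$, whence $\bar\kappa = \bar\kappa^{-1}$ and the cocycle identity rearranges to $g \wedge h = \bar\kappa(gh)\bar\kappa(g)\bar\kappa(h)$, which is precisely the defining equation for a $2$-coboundary. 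A direct computation gives $\alpha^2(h) = \alpha(h^{-1})\alpha(\kappa(h)) = h\,\kappa(h)^{-2} = h$ since $\kappa(h) \in M$ has exponent $2$, proving the order-$2$ claim. The hard part I expect is the further structural claim that $G$ must itself be a $2$-group, since the cocycle analysis only constrains $M$ and the wedge map. My attack would be to analyse $\bar\kappa$ restricted to an odd-order cyclic subgroup $\langle g\rangle$: the cocycle identity degenerates there (because $g \wedge g^k = 1$) into the assertion that $\bar\kappa|_{\langle g\rangle}$ is a homomorphism into the $2$-torsion group $M$, hence trivial, and then one combines this with the Künneth splitting $M(G) \cong M(G_2) \oplus M(G_{\text{odd}})$ and a coprime-order argument to rule out a nontrivial odd-order part of $G$.

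For the converse, given $\bar\kappa \colon G \to M$ realising the coboundary identity, I set $\alpha(h) := h^{-1}\bar\kappa(\pi(h))$ with $\pi\colon H \to G$ the projection. The coboundary equation together with centrality of $M$ makes $\alpha$ a homomorphism; injectivity follows since $\alpha(h) = 1$ forces $h \in M$, and then $\alpha(h) = h^{-1}$ gives $h = 1$, so $\alpha$ is an automorphism of the finite group $H$. The AI-property is immediate from $\alpha(h) \equiv h^{-1} \pmod M$, and $\alpha|_M(m) = m^{-1}\bar\kappa(1) = m^{-1}$ gives the required restriction; the order-$2$ claim follows once again from $\kappa(h)^2 = 1$.
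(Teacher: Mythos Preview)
Your argument tracks the paper's closely: encode $\alpha$ by $\kappa(h)=h\alpha(h)\in H'=M$, derive $\kappa(hg)=[h,g]\kappa(h)\kappa(g)$, descend to $\bar\kappa\colon G\to M\cong G\wedge G$, and read off the coboundary identity; the converse and the order-$2$ claim are handled the same way. Your deduction of $\exp(M)\mid 2$ by swapping $g,h$ in $\bar\kappa(gh)=(g\wedge h)\bar\kappa(g)\bar\kappa(h)$ is essentially the same computation the paper does (it applies $\alpha$ directly to $[g,h]$ and uses $[g^{-1},h^{-1}]=[g,h]$ in class~$2$), just carried out on the $G$-side rather than in $H$.

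Where your sketch falls short is precisely where you flagged it: showing $G$ is a $2$-group. Your attack---$\bar\kappa$ restricted to an odd-order cyclic subgroup is a homomorphism into the $2$-torsion group $M$, hence trivial, then K\"unneth---only yields $M(G_{\mathrm{odd}})=1$, i.e.\ that $G_{\mathrm{odd}}$ is cyclic. It does \emph{not} force $G_{\mathrm{odd}}=1$: the cocycle relation and the coboundary condition impose nothing on a cyclic odd-order direct factor (since such a factor wedges trivially with everything), so the ``coprime-order argument'' you gesture at has no purchase. The paper proceeds quite differently, and crucially does so \emph{before} establishing $\exp(M)=2$: from $c_{gh}=c_gc_h[g,h]$ and $c_{g^{-1}}=c_g^{-1}$ one computes inductively $\alpha^{2n}(g)=gc_g^{-2n}$ and $\alpha^{2n+1}(g)=g^{-1}c_g^{2n+1}$. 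If $|G|$ is odd then $m=|M|$ is odd, whence $\alpha^{m}(g)=g^{-1}c_g^{m}=g^{-1}$, so inversion is an automorphism of the non-abelian group $H$---a contradiction. The remaining case (even $|G|$ with a nontrivial odd Sylow subgroup) is then reduced to this via \cite[Lemma~2.9.1]{Kap}. You should replace your K\"unneth sketch with this iteration argument.
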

\begin{proof}
  First suppose that $H$ admits an AI-automorphism, say $\alpha$, whose restriction to $M$ is inversion.  Since $G$ is abelian, $H'\leq M$, and  now $M\leq H'\cap Z(H)$ implies $M=H'\leq Z(H)$. We decompose $G=G_2\times G_{2'}$ as above. It follows from \cite[Lemma 2.9.1]{Kap} that the Schur cover $H$ of $G$ is the direct product of Schur covers of $G_2$ and $G_{2'}$, respectively. Thus, we first assume that $G=G_{2'}$ and show that $G$ is cyclic. By assumption, for every $h\in H$ we can write $\alpha(h)=h^{-1}c_h$ for some $c_h\in H'$. Now \[h^{-1}g^{-1}c_{gh}=\alpha(gh)=\alpha(g)\alpha(h)=g^{-1}c_g h^{-1}c_h=h^{-1}g^{-1}[g^{-1},h^{-1}]c_gc_h\]implies that $c_{gh}=[g^{-1},h^{-1}]c_gc_h$ for all $g,h\in H$. Note that $[g,h]=[g^{-1},h^{-1}]^{gh}=[g^{-1},h^{-1}]$ since $H'$ is central, so $c_{gh}=c_gc_h[g,h]$. Moreover, $1=c_1=c_{gg^{-1}}$ yields $c_{g^{-1}}=(c_g)^{-1}$. This can be used to show that $\alpha^{2n+1}(g)=g^{-1}c_g^{2n+1}$ and $\alpha^{2n}(g)=gc_g^{-2n}$ for all $g\in H$ and $n\geq 1$. Since $G=G_{2'}$ has odd order, $m=|M|=|G\wedge G|$ is odd, and so $\alpha^{m}(g)=g^{-1}$ describes an isomorphism of $H$. This is only possible if $H$ is abelian, that is, if $G$ is cyclic, see Section \ref{secSM}. Back to the general case $G=G_2\times G_{2'}$, the same argument shows that $G_{2'}$ must be cyclic, hence it remains to consider the case $G=G_2$ in the following.  Since \[[h,g]=\alpha([g,h])=[\alpha(g),\alpha(h)]=[g^{-1}c_g,h^{-1}c_h]=[g^{-1},h^{-1}]=[g,h]^{h^{-1}g^{-1}}=[g,h]\] for all $g,h\in H$, we must have that $H'=M$ has exponent $2$. Thus, $\alpha$ is the identity on $M$, and so $\alpha^2(h)=\alpha(h^{-1}c_h)=hc_{h^{-1}}c_h=h$ for all $h\in H$ proves that  $\alpha$ has order 2. Note also that $[g,h]=c_{gh}c_gc_h$.  The map $\gamma\colon H\times H\to H'$, $(g,h)\mapsto [g,h]$, is a 2-cocycle in $Z^2(H,H')$. Since $H'$ is central, $\gamma$ induces a 2-cocycle $\delta\in Z^2(G,H')$. Since $G$ is abelian,  an isomorphism $G\wedge G\to H'$ is given by $g\wedge h\to [g',h']$, where $g',h'\in H$ are lifts of $g,h\in G$. This shows that the induced 2-cocycle $\delta$ lies in $Z^2(G,G\wedge G)$ and $\delta(g,h)=g\wedge h$ for all $g,h\in G$. Recall that if $h\in H$ and $z\in H'$, then $\alpha(h)=h^{-1}c_h$ and $(hz)^{-1}c_{hz}=\alpha(hz)=\alpha(h)\alpha(z)=h^{-1}c_hz$, which shows that $c_{hz}=c_h$. Thus for $g\in G$ we can define $\kappa(g)=c_{g'}$ where $g'\in H$ is a lift of $g$. This shows that $\delta(g,h)=\kappa(gh)\kappa(g)\kappa(h)$ with $\kappa(1)=1$, that is, $\delta$ is a 2-coboundary in $B^2(G,G\wedge G)$.

Conversely, let $G=G_2\times G_{2'}$ be abelian with cyclic $G_{2'}$ and  $G\wedge G$ of exponent $2$ such that  $\delta(g,h)=g\wedge h$ defines a 2-coboundary in $B^2(G_2,G_2\wedge G_2)$; by what is said above, it is sufficient to consider $G=G_2$. Since $\delta$ defines a $2$-coboundary, we have $g\wedge h=\delta(g,h)=\kappa(gh)\kappa(g)\kappa(h)$ for some map $\kappa\colon G\to G\wedge G$ with $\kappa(1)=1$. Let $H$ be a Schur cover of $G$ with natural projection $\pi\colon H\to G$, such that $M=\ker\pi$ satisfies  $M=H'\leq Z(H)$. Note that under the isomorphism $H'\to G\wedge G$, $[h,k]\mapsto \pi(h)\wedge \pi(k)$ we have $[h,k]=\delta(\pi(h),\pi(k))=\kappa(\pi(hk))\kappa(\pi(h))\kappa(\pi(k))$. Now define $\alpha\in\Aut(H)$ by $\alpha(h)=h^{-1}c_h$ where $c_h=\kappa(\pi(h))$; note that
\[\alpha(hk)=k^{-1}h^{-1}c_{hk}=h^{-1}k^{-1}[k^{-1},h^{-1}]c_{hk}=h^{-1}k^{-1}[k,h]c_{hk}=h^{-1}c_hk^{-1}c_k=\alpha(h)\alpha(k),\]
so $\alpha$ is indeed a homomorphism. Clearly, $\alpha$ acts as inversion (that is, as identity) on $M$, and as inversion on $H/M$. This proves the claim.
\end{proof}

\begin{proposition}\label{prop:ElAb2}
  If $G$ is an abelian $2$-group such that $\exp(G\wedge G)$ divides $2$, then $G\cong C_2^n \times C_{2^m}$ for some $n,m$ and $\tau(G)\cong\tilde K(G,3)$.
\end{proposition}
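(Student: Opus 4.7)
I would prove this via Theorem \ref{thm:epi2} by checking its hypothesis through the preceding proposition. If $G$ is cyclic the statement is immediate (take $H = G$ with $\alpha$ the inversion map); in the remaining case $G$ is a non-cyclic abelian $2$-group and $M \cong G \wedge G$ has exponent $2$ by hypothesis. The preceding proposition then reduces the task to exhibiting a map $\kappa\colon G \to G \wedge G$ such that the symmetric map $b(g,h) = g \wedge h$ equals the $2$-coboundary
\[\delta\kappa(g,h) = \kappa(gh)\kappa(g)\kappa(h)\]
(inverses are absorbed because $\exp(G \wedge G) = 2$); note that $G$ acts trivially on $G \wedge G$ since $G$ is abelian, so the coboundary formula from the preceding paragraph applies directly.

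For the construction I would fix a cyclic decomposition $G = \langle e_1\rangle \times \cdots \times \langle e_n\rangle$ with $|e_i| = 2^{a_i}$, write each $g \in G$ uniquely as $g = e_1^{g_1}\cdots e_n^{g_n}$ with $0 \le g_i < 2^{a_i}$, and set
\[\kappa(g) = \prod_{i<j}(e_i \wedge e_j)^{g_i g_j}.\]
This is well defined because the order of $e_i \wedge e_j$ in the abelian exterior square divides $\gcd(2^{a_i}, 2^{a_j})$, which divides $2$ by the exponent-$2$ assumption, so only the parity of each exponent matters. Expanding $(g_i + h_i)(g_j + h_j)$ and absorbing carries of the form $c_k 2^{a_k}$ (which are $0 \pmod{2^{\min(a_i,a_j)}}$), one finds
\[\delta\kappa(g,h) = \prod_{i<j}(e_i \wedge e_j)^{g_i h_j + g_j h_i},\]
while bilinearity and antisymmetry of the abelian exterior square give $g \wedge h = \prod_{i<j}(e_i \wedge e_j)^{g_i h_j - g_j h_i}$. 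Agreement of the two products comes down to $2 g_j h_i \equiv 0 \pmod{|e_i \wedge e_j|}$, which is exactly the content of the exponent-$2$ hypothesis.

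The main (and really only) obstacle is the bookkeeping for well-definedness and the coboundary identity: one must confirm that the carries arising when $g_i + h_i \ge 2^{a_i}$ contribute trivially in $G \wedge G$, and that the sign flip implicit in the antisymmetry of $\wedge$ is absorbed by the exponent-$2$ condition. Once $\kappa$ is verified, the preceding proposition supplies a Schur cover $H$ of $G$ together with an AI-automorphism $\alpha \in \Aut(H)$ inverting the Schur multiplier, and Theorem \ref{thm:epi2} then yields $\tau(G) \cong \tilde K(G, 3)$.
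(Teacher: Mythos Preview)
Your proof is correct and takes a genuinely different route from the paper's. The paper proceeds by direct computation: it writes $\tau(G)$ and $\tilde K(G,3)$ explicitly as $G^2.(G\wedge G)$ via Propositions~\ref{prop:ab3} and~\ref{prop:tauab}, writes down a candidate map $\psi$ sending a natural polycyclic generating set of $\tau(G)$ to the same generators in $\tilde K(G,3)$, works out the general formula $\psi(a,b;c)=(a,b;c\prod_{i<j}(x_i\wedge x_j)^{a_ia_j+b_ib_j+a_ib_j-a_jb_i})$, and then checks by hand that $\psi$ respects the two multiplications---the discrepancy in exponents collapses to $-2(b_je_i+a_jd_i)$, which vanishes precisely because $G\wedge G$ has exponent~$2$. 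You instead feed the hypotheses back through the machinery already developed: you verify the cohomological criterion in the converse of the preceding proposition by exhibiting the $1$-cochain $\kappa(g)=\prod_{i<j}(e_i\wedge e_j)^{g_ig_j}$, whence that proposition supplies an AI-automorphism of a Schur cover inverting $M$, and Theorem~\ref{thm:epi2} finishes. Your argument is more conceptual and explains \emph{why} the isomorphism holds (the obstruction isolated in the preceding proposition vanishes), whereas the paper's argument is self-contained and makes the isomorphism completely explicit. It is also a pleasant counterpoint to the paper's narrative: the preceding proposition is introduced as showing that Theorem~\ref{thm:epi2} \emph{cannot} be applied to abelian groups in general, and your proof demonstrates that under the exponent-$2$ hypothesis it can after all.
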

\begin{proof}
It is straightforward to see that an abelian $G$ as in the statement must be isomorphic to $C_2^n\times C_{2^m}$ for some $n$ and $m$. If $m\in\{0,1\}$, then $G$ is elementary abelian and Corollary \ref{corExp2} proves the claim. Now let $m\geq 2$ and let $\{x_1,\ldots,x_n,y\}\subseteq G$ be a generating set with $y$ of order $2^m$ and each $x_i$ of order $2$. A Schur cover of $G$ is $H=\langle x_1',\ldots,x_n',y',M\rangle$ where \[M=\langle z_{i,j},z_k : 1\leq i<j\leq n, 1\leq k\leq n\rangle\cong M(G)\cong G\wedge G\] is 2-elementary abelian and central in $H$, subject to the relations $(x_i')^2=(y')^{2^m}=1$, $[x_i',x_j']=z_{i,j}$ and $[x_k',y']=z_k$. Now inversion of $G$ lifts to an AI-automorphism of $H$ that fixes $M$ element-wise, and therefore $\tau(g)\cong \tilde K(G,3)$ by Theorem \ref{thm:epi2}.
\end{proof}

\begin{example}\label{exNonI}
 For $A=C_4^3$, we determine $\tau(A)\not\cong \tilde K(A,3)$ by computing with GAP that $\Aut(\tau(A))$ and $\Aut(\tilde K(G,3))$ have orders $94575592174780416$ and $283726776524341248$, respectively.   Since $A\wedge A$ has exponent 4, this is also an example showing that the assumptions in Proposition \ref{prop:ElAb2} cannot be relaxed. Similarly, it shows that Proposition  \ref{prop:rank2} cannot be extended to higher rank. A comparison of the automorphism group orders also shows that $\tau(B)\not\cong \tilde K(B,3)$ for $B=C_5^3$. 
\end{example}

\begin{proposition}\label{prop:rank2}
Let $G$ be an abelian group.
\begin{ithm}
\item Suppose all Sylow $p$-subgroups of $G$ have rank at most 2. Then $\tau(G)\cong \tilde K(G,3)$ if and only if the Sylow 3-subgroup of $G$ is cyclic.
\item If the Sylow 3-subgroup of $G$ has rank at least 2, then $\tau(G)\not\cong\tilde{K}(G,3)$.
\end{ithm}
\end{proposition}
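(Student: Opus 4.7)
The plan is to reduce to an abelian $p$-group via the Sylow decomposition $G = \prod_p G_p$: since wedges between coprime-order elements vanish for abelian groups, $G \wedge G = \bigoplus_p (G_p \wedge G_p)$, and the multiplication formulas from Propositions~\ref{prop:tauab} and~\ref{prop:ab3} give $\tau(G) \cong \prod_p \tau(G_p)$ and $\tilde K(G,3) \cong \prod_p \tilde K(G_p,3)$. It therefore suffices to treat each Sylow subgroup individually.

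For part~(b), the key is a center count: using Propositions~\ref{prop:tauab} and~\ref{prop:ab3} one finds $|Z(\tilde K(G,3))|/|Z(\tau(G))| = |(G/Z^\wedge(G))[3]|$. For an abelian Sylow $G_p = C_{p^{a_p}} \times C_{p^{b_p}}$ of rank $2$ with $a_p \geq b_p$, direct calculation gives $Z^\wedge(G_p) = p^{b_p} G_p$ and $G_p/Z^\wedge(G_p) \cong (C_{p^{b_p}})^2$, so the $3$-torsion of $G/Z^\wedge(G)$ is nontrivial precisely when the Sylow $3$-subgroup has rank $\geq 2$. In that case the centers differ in order, forcing $\tau(G) \not\cong \tilde K(G,3)$.

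Part~(a) ``only if'' is immediate from~(b). For ``if'', assume the Sylow $3$-subgroup is cyclic; by the Sylow reduction and the trivial cyclic case (both groups equal $G_p^2$), the remaining case is rank-$2$ $G_p = C_{p^a} \times C_{p^b}$ with $p \neq 3$. I would construct an isomorphism $\psi \colon \tau(G_p) \to \tilde K(G_p,3)$ of the form $(a,b;c) \mapsto (\phi(a,b);\lambda c + g(a,b))$; the homomorphism condition reduces to $\delta g = \phi^* \gamma_{\tilde K} - \lambda \gamma_\tau$ for the cocycles $\gamma_\tau((a,b),(d,e)) = b \wedge d$ and $\gamma_{\tilde K}((a,b),(d,e)) = a\wedge d + a\wedge e + b\wedge e$. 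Their commutator pairings (antisymmetric parts) are nondegenerate alternating forms on $G_p^2$ with Pfaffians $-1$ and $3$ in $G_p \wedge G_p \cong C_{p^b}$, both units when $p \neq 3$, so the standard classification of nondegenerate alternating forms over $\mathbb{Z}/p^b$ yields $\phi \in \Aut(G_p^2)$ and a unit $\lambda$ matching them.

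The residual symmetric cocycle must then be exhibited as a coboundary. For odd $p$, the identity $\gamma_{\tilde K}^{\mathrm{sym}}(x,y) = \frac12(a\wedge e - b\wedge d) = \delta g_0$ with $g_0(a,b) := \frac12\, a\wedge b$ (the $\frac12$-polarisation) makes $\gamma_{\tilde K}^{\mathrm{sym}}$ a coboundary; combined with $\gamma_\tau^{\mathrm{sym}} = -\gamma_{\tilde K}^{\mathrm{sym}}$, the residual symmetric cocycle becomes $\delta(g_0 \circ \phi - g_0)$. The case $p = 2$ with $b = 1$ is Proposition~\ref{prop:ElAb2}. The main obstacle is the case $p = 2$ with $b \geq 2$, where the $\frac12$-polarisation is unavailable; there I would still match commutator pairings (since $3$ is a unit modulo $2^b$) and construct $g$ by an explicit $2$-adic adaptation.
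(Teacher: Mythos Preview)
Your Sylow reduction and part (b) are essentially what the paper does (the paper also compares $Z(\tau(G_3))$ and $Z(\tilde K(G_3,3))$ via Propositions~\ref{prop:ab3} and~\ref{prop:tauab}, observing that a non-cyclic abelian $3$-group $G_3$ has $Z^\wedge(G_3)\ne G_3$ and hence some $u\notin Z^\wedge(G_3)$ with $u^3\in Z^\wedge(G_3)$).

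For part (a) ``if'', however, your cocycle-matching programme has two genuine gaps. First, the Pfaffian/classification step is not justified when $a>b$: the alternating forms $\omega_\tau,\omega_{\tilde K}\colon G_p^2\times G_p^2\to G_p\wedge G_p\cong C_{p^b}$ are then necessarily \emph{degenerate} (their radical contains $p^bC_{p^a}^2$), so ``standard classification of nondegenerate alternating forms over $\Z/p^b$'' does not apply, and even after passing to the quotient by the radical one must check that the required automorphism of $(C_{p^b})^4$ lifts to $\Aut(G_p^2)$, which is not automatic. Second, the case $p=2$, $b\ge 2$ is simply not done: ``an explicit $2$-adic adaptation'' is a promise, not a proof, and this is exactly where the $\tfrac12$-polarisation trick fails. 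The paper sidesteps both issues by a much more direct route: it writes down explicit polycyclic presentations of $\tau(G_p)$ and $\tilde K(G_p,3)$ on generators $g_1,h_1,g_2,h_2,k$ and checks that the single assignment $(g_1,h_1,g_2,h_2,k)\mapsto(g_1g_2^2,\,h_1,\,g_2g_1^2,\,h_2,\,k)$ extends to an isomorphism whenever $p\ne 3$, the only condition being that $\det\begin{psmallmatrix}1&2\\2&1\end{psmallmatrix}=-3$ is a unit modulo $p^a$. This handles odd $p$ and $p=2$ uniformly with no case split and no cohomological bookkeeping.
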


\begin{proof}
Let $G=\prod_p G_p$ be the decomposition of $G$ into its Sylow subgroups. By \cite[Proposition~4.1]{Lie08} and \cite[Corollary 3.7]{Roc91}, we can also decompose $\tau(G)=\prod_p \tau(G_p)$ and $\tilde K(G,n)=\prod_p \tilde K(G_p,n)$, and every isomorphism $\tau(G)\to\tilde K(G,n)$ induces an isomorphism from $\tau(G_p)$ to $\tilde K(G_p,n)$ for every $p$. (We note that \cite[Corollary 3.7]{Roc91} considers $\nu(G)$, but it implies the fact needed for $\tau(G)$.) Thus it is sufficient to assume that $G$ is an abelian $p$-group.
\begin{iprf}
\item We have $G\cong C_m\times C_n$ with $m=p^a$ and $n=p^b$ for $a\geq b$. Recall from Section \ref{secSM} that $M(G)\cong G\wedge G\cong C_n$. Let $g$ and $h$ be generators of $C_m$ and $C_n$, respectively.  Considering the description of $\tau(G)$ as in Proposition \ref{prop:tauab}, set $g_1=(g,1;1)$, $h_1=(h,1;1)$, $g_2=(1,g;1)$, $h_2=(1,h;1)$, and $k=(1,1;h\wedge g)$. These elements form a polycyclic generating sequence of $\tau(G)$, with corresponding polycyclic presentation\[\tau (G)={\rm pc}\langle g_1,h_1,g_2,h_2,k\mid g_1^{m},g_2^{m},h_1^{n},h_2^{n},k^{n}, g_2^{h_1}=g_2k^{-1}, h_2^{g_1}=h_2k\rangle.\] Using the identification of $\tilde K(G,3)=G^2.(G\wedge G)$ as in Proposition \ref{prop:ab3}, we obtain
  \begin{eqnarray*}\tilde{K}(G,3)&=&{\rm pc}\langle g_1,h_1,g_2,h_2,k\mid g_1^{m},g_2^{m},h_1^{n},h_2^{n},k^{n}, h_1^{g_1}=h_1k^2,\\
&&\hspace*{3.3cm} g_2^{h_1}=g_2k^{-1}, h_2^{g_1}=h_2k, h_2^{g_2}=h_2k^2\rangle.
  \end{eqnarray*}
If $p\ne 3$, then, by von Dyck's Theorem,  $(g_1,h_1,g_2,h_2,k)\mapsto (g_1g_2^2,h_1,g_2g_1^2,h_2,k)$ extends to an isomorphism $\tilde{K}(G,3)\to\tau(G)$. If $p=3$ and $G$ has rank 2, then  $\tau(G)\not\cong\tilde{K}(G,3)$, see part b). If $G$ is a cyclic $3$-group, then $M(G)=1$, hence $\tau(G)=\tilde K(G,3)$ by Theorem \ref{thm:epi2}.

\item Let $G$ be an abelian $3$-group.  As $G$ is not cyclic, hence $Z^\wedge(G)\ne G$, it follows  that there exists $u\in G\setminus Z^\wedge(G)$ with $u^3\in Z^\wedge(G)$. Now Propositions~\ref{prop:ab3} and \ref{prop:tauab} imply $\tau(G)\not\cong \tilde K(G,3)$.
\end{iprf} 
 \end{proof}

Proposition \ref{prop:rank2}a) together with Proposition \ref{propabc} shows that there are infinitely many abelian groups $G$ such that $\tau(G)\cong \tilde K(G,3)$, but no Schur cover of $G$ has an AI-automorphism whose restriction to the Schur multiplier is inversion.

\section{Bogomolov multiplier}
\label{sec:bog}

Let $G$ be a group with AI-automorphism $\alpha$, and let $ {\Phi_\alpha}\colon\tau(G)\to K(G,3)$ be the epimorphism in Section \ref{secEpi}. Set \[M^\flat(G)=\langle [x,\ic{y}]: x,y\in G, [x,y]=1\rangle_{\tau(G)}\]and note that $M^\flat(G)$ is contained in the kernel of the commutator map $\kappa \colon[G,\ic{G}]_{\tau(G)}\to G'$. Define \[\tau^\flat(G)=\tau (G)/M^\flat (G).\] If $x$ and $y$ commute in $G$, then $ {\Phi_\alpha}([x,\ic{y}])=(x^{-1}x,y^{-1}y,\alpha([x,y]))=(1,1,1)$, therefore $ {\Phi_\alpha}$ induces an epimorphism $ \Phi_\alpha^\flat \colon \tau^\flat(G)\to K(G,3)$. Theorem \ref{thm:kernel} implies that the kernel of this map is $(\ker\kappa)/M^\flat (G)$, which is isomorphic to the {\em Bogomolov multiplier} $B_0(G)$ of $G$, see \cite{Mor12}.

\begin{corollary}\label{cor:seqB0}
The existence of an AI-automorphism of $G$ yields  a central extension
\[\begin{tikzcd}
    1\arrow{r} & B_0(G) \arrow{r} & \tau^\flat (G) \arrow{r} & K(G,3) \arrow{r} & 1.
\end{tikzcd}\]
\end{corollary}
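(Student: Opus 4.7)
The plan is short because almost all the ingredients are assembled in the paragraph preceding the corollary; the proof is essentially an assembly argument together with verifying centrality of the kernel.

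First, I would invoke Theorem \ref{thm:kernel} to obtain the epimorphism $\Phi_\alpha \colon \tau(G) \to K(G,3)$ whose kernel is exactly $\ker\kappa$, where $\kappa \colon G \wedge G = [G,\ic G]_{\tau(G)} \to G'$ is the commutator map, and this kernel is central in $\tau(G)$ (by Remark \ref{remKappa}). Second, I would note that the defining generators $[x,\ic y]$ of $M^\flat(G)$, with $[x,y]=1$, all lie in $\ker\kappa$, since $\kappa([x,\ic y]) = [x,y] = 1$. Hence $M^\flat(G) \leq \ker\kappa$, and in particular $M^\flat(G) \leq \ker \Phi_\alpha$. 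Before forming the quotient $\tau^\flat(G)$ one should also remark that $M^\flat(G)$ is normal in $\tau(G)$: the defining relations of $\nu(G)$ give $[x,\ic y]^g = [x^g,\icb{y^g}]$ and $[x,\ic y]^{\ic g} = [x^g,\icb{y^g}]$, and conjugation preserves the condition $[x,y]=1$, so conjugation by any element of $\tau(G)$ (which is generated by $G$ and $\ic G$) permutes the defining generators of $M^\flat(G)$.

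Next I would invoke the induced epimorphism $\Phi_\alpha^\flat \colon \tau^\flat(G) \to K(G,3)$ already constructed in the paragraph above the corollary. Its kernel is $\ker \Phi_\alpha / M^\flat(G) = \ker\kappa / M^\flat(G)$, and by the identification recalled from \cite{Mor12} this quotient is isomorphic to the Bogomolov multiplier $B_0(G)$. This yields the desired short exact sequence
\[
\begin{tikzcd}
1\arrow{r} & B_0(G)\arrow{r} & \tau^\flat(G)\arrow{r}{\Phi_\alpha^\flat} & K(G,3)\arrow{r} & 1.
\end{tikzcd}
\]

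Finally, for centrality, I would observe that $\ker\kappa \leq Z(\tau(G))$ by Theorem \ref{thm:kernel}, so its image in any quotient of $\tau(G)$ is central; in particular $\ker\kappa / M^\flat(G) \leq Z(\tau^\flat(G))$, showing that the extension is indeed central. There is no real obstacle here: every nontrivial input (surjectivity, identification of $\ker \Phi_\alpha$ with $\ker\kappa$, centrality of $\ker\kappa$, and the identification $\ker\kappa / M^\flat(G) \cong B_0(G)$) has already been established or quoted earlier, so the proof reduces to citing Theorem \ref{thm:kernel} and the construction of $\Phi_\alpha^\flat$ and recording that the central subgroup $\ker\kappa$ descends to a central subgroup in $\tau^\flat(G)$.
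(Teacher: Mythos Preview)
Your proposal is correct and follows essentially the same route as the paper: the corollary is an immediate consequence of Theorem~\ref{thm:kernel} and the observations in the preceding paragraph, namely that $M^\flat(G)\leq\ker\kappa=\ker\Phi_\alpha$, that the induced map $\Phi_\alpha^\flat$ has kernel $(\ker\kappa)/M^\flat(G)\cong B_0(G)$, and that centrality is inherited from $\ker\kappa\leq Z(\tau(G))$. Your additional remark on normality of $M^\flat(G)$ is a welcome detail the paper leaves implicit.
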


\begin{proposition}
  \label{lem:KtildeB0}
Let $H$ be a Schur cover of a group $G$ with $H/M=G$. If $\alpha$ is an AI-automorphism of $H$, then  $\tilde{K}(G,3)\cong \tau^\flat(H)/\im\iota$ for the  monomorphism  $\iota \colon M^2\to \tau^\flat (H)$ given by
  \[(m_1,m_2)\mapsto m_1\ic{m_2}\prod\nolimits_{i=1}^\ell [\alpha ^{-1}(k_i),\icb{\alpha^{-1}(h_i)}]_{\tau^\flat(H)},\]
where the elements $h_i,k_i\in G$ are defined by $\alpha(m_1m_2)=m_2^{-1}m_1^{-1}[h_\ell,k_\ell]\ldots [h_1,k_1]$. 
\end{proposition}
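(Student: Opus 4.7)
The plan is to construct an epimorphism $\Psi\colon \tau^\flat(H)\to \tilde K(G,3)$ and identify its kernel with $\im\iota$. Since $\alpha$ is an AI-automorphism of $H$, Corollary \ref{cor:seqB0} applied to $H$ yields an epimorphism $\Phi_\alpha^\flat\colon \tau^\flat(H)\to K(H,3)$ with central kernel $B_0(H)$. Combined with the natural epimorphism $K(H,3)\to K(H,3)/K(M,3)\cong \tilde K(G,3)$ from Proposition \ref{prop:Kt}(b), this gives the required $\Psi$ whose kernel equals $(\Phi_\alpha^\flat)^{-1}(K(M,3))$.

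To describe this preimage, I first observe that $K(M,3)=\{(m_1,m_2,m_2^{-1}m_1^{-1}):m_1,m_2\in M\}\cong M^2$ since $M$ is central and abelian. For each $(m_1,m_2)\in M^2$, a preimage under $\Phi_\alpha^\flat$ is constructed explicitly: applying $\Phi_\alpha$ to $m_1\ic{m_2}\in \tau^\flat(H)$ gives $(m_1,m_2,\alpha(m_1m_2))$, and writing $\alpha(m_1m_2)=m_2^{-1}m_1^{-1}\prod_i [h_i,k_i]$ shows that the third coordinate differs from $m_2^{-1}m_1^{-1}$ by $\prod_i[h_i,k_i]$. Using the identity $\Phi_\alpha([u,\ic{v}])=(1,1,\alpha([u,v]))$ established in the proof of Theorem~\ref{thm:kernel}, the correction term $\prod_i [\alpha^{-1}(h_i),\icb{\alpha^{-1}(k_i)}]$ (with the appropriate sign convention) compensates this discrepancy, so $\iota(m_1,m_2)$ as defined maps into $K(M,3)$.

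Next, I verify that $\iota$ is a well-defined monomorphism. Well-definedness of the value in $\tau^\flat(H)$ uses that any two choices of expressions $\prod_i[h_i,k_i]=\prod_j[h'_j,k'_j]$ for the same element of $H'$ give commutator products that coincide in $\tau(H)$ modulo $\ker\kappa_H$, which becomes $M^\flat(H)$ after passing to $\tau^\flat(H)$; the centrality of $M$ plays a crucial role here, because $[m_i,\ic{m_j}]\in M^\flat(H)$ trivialises the order ambiguities in writing $m_1\ic{m_2}$. Injectivity of $\iota$ is immediate: $\Phi_\alpha^\flat(\iota(m_1,m_2))=(m_1,m_2,m_2^{-1}m_1^{-1})$, which is trivial only when $m_1=m_2=1$. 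Multiplicativity follows from combining the commutativity of $m_1$ and $\ic{m_2}$ in $\tau^\flat(H)$ with the homomorphism property of $\alpha$ applied to $(m_1m_1')(m_2m_2')$.

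The main obstacle is the equality $\im\iota=\ker\Psi$. The inclusion $\im\iota\subseteq\ker\Psi$ holds by construction. For the reverse inclusion, any $x\in\ker\Psi$ satisfies $\Phi_\alpha^\flat(x)=(m_1,m_2,m_2^{-1}m_1^{-1})\in K(M,3)$ for some $m_1,m_2\in M$, so $x\iota(m_1,m_2)^{-1}$ lies in $B_0(H)=\ker\Phi_\alpha^\flat$. Thus the argument reduces to showing $B_0(H)\leq \im\iota$, which is the delicate step: it is verified by taking a generator $[x,\ic{y}]$ of $M^\flat(H)$-coset in $\ker\kappa_H$, observing that the commutator $[x,y]$ is trivial in $H$, and exhibiting this generator as $\iota$ of a suitable pair in $M^2$. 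Equivalently, one can observe that since $H$ is a Schur cover, every central element $[x,y]=1$ in $H$ forces the corresponding commutator tensor to lie in the image of $M^2$ via $\iota$, completing the identification.
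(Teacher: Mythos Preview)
Your overall architecture matches the paper's: compose $\Phi_\alpha^\flat\colon\tau^\flat(H)\to K(H,3)$ with the quotient by $K(M,3)$ to get $\Psi$, then identify $\ker\Psi$ with $\im\iota$. The crucial missing ingredient is the fact that $B_0(H)=0$ whenever $H$ is a Schur cover (the paper cites \cite[Proposition~6.12]{Mal99}). With this, $\Phi_\alpha^\flat$ is an \emph{isomorphism} by Corollary~\ref{cor:seqB0}, and $\iota$ is nothing but the composite of the embedding $M^2\cong K(M,3)\hookrightarrow K(H,3)$ with $(\Phi_\alpha^\flat)^{-1}$; well-definedness, injectivity, and the equality $\im\iota=\ker\Psi$ are then all immediate.

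Without $B_0(H)=0$, two steps in your argument break down. First, well-definedness of $\iota$: two expressions $\prod_i[h_i,k_i]=\prod_j[h'_j,k'_j]$ give commutator products in $\tau(H)$ that differ by an element of $\ker\kappa_H$, and in $\tau^\flat(H)$ this difference lies in $\ker\kappa_H/M^\flat(H)=B_0(H)$, not in the trivial group as you implicitly assume. Second, your treatment of the inclusion $B_0(H)\le\im\iota$ is not correct: you propose to look at ``a generator $[x,\ic{y}]$ \dots\ observing that $[x,y]=1$'', but single commutators $[x,\ic{y}]$ with $[x,y]=1$ are precisely the generators of $M^\flat(H)$ and hence are \emph{trivial} in $B_0(H)$; general elements of $\ker\kappa_H$ are products $\prod_i[x_i,\ic{y_i}]$ with $\prod_i[x_i,y_i]=1$, and there is no evident way to exhibit such an element as $\iota(m_1,m_2)$. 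The honest fix is to invoke $B_0(H)=0$, which resolves both issues simultaneously and is exactly what the paper does.
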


\begin{proof}Since $M$ is abelian,  $M^2\cong K(M,3)$ with  isomorphism $(m_1,m_2)\mapsto (m_1,m_2,m_1^{-1}m_2^{-1})$. Note that $K(M,3)$ is naturally embedded in $K(H,3)$. From \cite[Proposition 6.12]{Mal99} we conclude that $B_0(H)$ is trivial, therefore $\Phi_\alpha^\flat\colon\tau^\flat (H)\to K(H,3)$ is an isomorphism by Corollary \ref{cor:seqB0}. Note that $\iota$ is the map that makes the following diagram commutative; in particular, $\iota$ is an injective homomomorphism, and the results follows from  taking quotients in diagram:
    \[\begin{tikzcd}
        M^2\arrow{r}{\cong} \arrow{d}{\iota} & K(M,3) \arrow{d}\\
        \tau^\flat(H) \arrow{r}{ \Phi_\alpha^\flat} & K(H,3).
      \end{tikzcd}\]
    \end{proof}

  \vspace*{-1ex}

 
\section{Computations}
\label{sec:comput}

\noindent If $G$ is a finite polycyclic group, then also $\tilde K(G,3)$ is polycyclic, see \cite[Proposition 1.5]{Lie08}. In this situation, the algorithms described in \cite{EN} can be used to compute $\tau(G)$; these algorithms are implemented in the software package Polycyclic, distributed with the computer algebra system GAP \cite{gap}.  Our explicit formulas in Section \ref{sec:explicit} can be used to compute a polycyclic presentation for $\tilde K(G,3)$. We have done this to test whether $\tau(G)$ and $\tilde K(G,3)$ are isomorphic for certain examples of groups (abelian, Frobenius, extra-special, \ldots). Even though there exist powerful algorithms for working with polycyclic groups, approaching this isomorphism problem with conventional methods poses a serious computational challenge. This is due to the fact that if $G$ is an abelian group of order $p^n$, then $\tilde K(G,3)$ and $\tau(G)$ are both large central extensions of $G\wedge G$ by $G^2$; they have class 2, order $p^{2n}|G\wedge G|$, and often seem indistinguishable. The latter is not a surprise, given the folklore conjecture that \emph{most}  $p$-groups have class 2: for example, note that among the 49499125314 groups of order at most $1024$ (up to isomorphism), 99.976\% of these are 2-groups and 98.595\% are 2-groups of class 2, see \cite[Section 4]{gnu}. A computational isomorphism test for these groups reduces to orbit calculations of huge matrix groups on very large vector spaces; often these computations turn out to be infeasible. For example, the powerful implementations of the $p$-group algorithms for automorphism groups and isomorphisms (provided by the GAP package Anupq) struggle to compute automorphisms and isomorphisms for $\tau(G)$ and $\tilde K(G,3)$ already for moderately sized $p$-groups such as $G=C_7^3$. Most of our computer experiments have therefore focused on groups of cubefree order, that is, groups whose order is not divisible by any prime power $p^3$. 

\begin{example}\label{ex:cf} 
In Table \ref{tabTK} we report on some example computations: there are 237 cubefree groups of order at most 100. Of these, 113 groups are abelian, 123 groups are non-abelian solvable, and 1 group is simple. Every abelian $G$ admits AI-automorphisms and, being cubefree, $\tau(G)\cong \tilde K(G,3)$ if and only if $G$ has a cyclic Sylow $3$-subgroup, see Proposition \ref{prop:rank2}. Our computations show that, with two exceptions,  $\tau(G)\cong\tilde K(G,3)$ if and only if $G$ has  AI-automorphisms. The exceptions are $A=C_3\times{\rm Alt}_4$ and $B=C_3^2\times D_{10}$; we have $Z(\tilde K(A,3))=C_6\times C_3$ and $Z(\tau(A))=C_6$, and $Z(\tilde K(B,3))= C_3^3$ and $Z(\tau(B))=C_3$.
\end{example}

\begin{table}[h] 
\caption{Statistics for solvable non-abelian groups of cubefree order at most 100}
\begin{tabular}{c|c|c}\label{tabTK}
   $\tau\cong \tilde K$ & has AI & \# groups\\\hline
    yes & yes & 96\\ 
    yes & no &  0\\
    no & yes &  2 \\
    no & no &   25\\[-1ex]   
   \end{tabular}
\end{table}

\begin{example}\label{exAI}
  Running over GAP's group database, there are  6505 non-abelian solvable groups of order $<256$; of these groups, 6127 have  AI-automorphisms. Note that every simple and every abelian group admits AI-automorphisms. This computation suggests that for many groups we can apply Corollary \ref{cor:seq} to describe $\tau(G)$ as a central extension of $H_2(G,\Z)$ by $K(G,3)$. Table \ref{tabTK}  suggests that the existence of AI-automorphisms for $G$ is strongly connected to the property $\tau(G)\cong\tilde K(G,3)$; cf.\ also Proposition \ref{prop:es}b,c).
\end{example}

\vspace*{-1.7ex}


\begin{thebibliography}{9}
\bibitem[Boston 2006]{Bos}
N.\ Boston. Embedding 2-groups in groups generated by involutions. J.\ Algebra {\bf 300} (2006) 73--76.
\bibitem[BJR 1987]{BJR}
R.\ Brown, D.\ L.\ Johnson, and E.\ F.\ Robertson.  Some computations of nonabelian tensor products of groups.
J.\ Algebra {\bf 111} (1987) 177--202.
  \bibitem[BL 1987]{BL87}
  R.\ Brown and J.-L.\ Loday. Van Kampen theorems for diagrams of spaces. Topology {\bf 26} (1987) 311--335.
\bibitem[CDO 2008]{gnu}
J.\ H.\ Conway, H.\ Dietrich, and E.\ A.\ O'Brien.
Counting groups: gnus, moas and other exotica.
Math.\ Intelligencer {\bf 30} (2008) 6--15
  \bibitem[Ellis 1995]{Ell95}
    G.\ Ellis. Tensor products and $q$-crossed modules.  J.\ London Math.\ Soc.\ {\bf 51} (1995) 243--258.
\bibitem[EN 2008]{EN}
B.\ Eick and W.\ Nickel.
Computing the Schur multiplicator and the nonabelian tensor square of a polycyclic group.
J.\ Algebra {\bf 320} (2008) 927--944.
 \bibitem[GAP]{gap}
   The GAP Group. GAP -- groups, algorithms, and programming. \url{gap-system.org}
\bibitem[HEO]{handbook}
D.\ F.\ Holt, B.\ Eick, E.\ A.\ O'Brien.
Handbook of computational group theory. Discrete Mathematics and its Applications (Boca Raton). Chapman \& Hall/CRC, Boca Raton, FL, 2005.
\bibitem[Huppert 1967]{Hup}
B.\ Huppert. Endliche Gruppen I. Springer, Berlin 1967.
\bibitem[Karpilovsky 1987]{Kap}
G.\ Karpilovsky. The Schur Multiplier. Clarendon Press, Oxford, 1987.
 \bibitem[Liedtke 2008]{Lie08}
  C.\ Liedtke. Natural central extensions of groups.
  Groups Geom.\ Dyn.\ {\bf 2} (2008) 245--261.
\bibitem[Liedtke 2010]{Lie10}
C.\ Liedtke. Fundamental groups of Galois closures of generic projections.
Trans.\ Amer.\ Math.\ Soc.\ 362 (2010) 2167--2188.
  \bibitem[MM 1999]{Mal99}
  G.\ Malle and B.\ H.\ Matzat. Inverse Galois theory.  Springer Monographs in Mathematics, 1999.
  \bibitem[Moravec 2012]{Mor12}
  P.\ Moravec. Unramified Brauer groups of finite and infinite groups. Amer.\ J.\ Math.\ {\bf 134} (2012) 1679--1704.
  \bibitem[Robinson 1982]{Rob82}
  D.\ J.\ S.\ Robinson. A Course in the Theory of Groups. Springer, Berlin, 1982.
  \bibitem[Rocco 1991]{Roc91}
  N.\ R.\ Rocco. On a construction related to the nonabelian tensor square of a group. Bol.\ Soc.\ Brasil.\ Mat.\ (N.S.) {\bf 22} (1991) 63--79.
\end{thebibliography}
\end{document}